\documentclass[twocolumn,envcountsame]{svjour3}
\smartqed
\usepackage{amsmath,amssymb}
\usepackage{graphicx}
\usepackage{subcaption}
\usepackage{enumerate}
\usepackage{times}
\usepackage{hyperref}
\usepackage{algorithm,algpseudocode}
\usepackage{mathtools}
\usepackage{float}
\usepackage{pgfplots}
\usepackage{booktabs}

\usepackage{xcolor}
\usepackage[textsize=footnotesize,colorinlistoftodos]{todonotes}

\usepackage{tikz}

\newcommand{\RR}{\mathbf{R}}
\newcommand{\norm}[2][]{{\|{#2}\|_{}}_{#1}}
\newcommand{\abs}[1]{|#1|}
\newcommand{\scp}[2]{\langle #1,#2\rangle}

\newcommand{\laplace}{\Delta}

\newcommand{\dummy}{\,\cdot\,}
\newcommand{\pmat}[1]{\begin{pmatrix} #1 \end{pmatrix}}
\newcommand{\MA}{\mathcal{A}}
\newcommand{\MB}{\mathcal{B}}
\newcommand{\MR}{\mathcal{R}}
\newcommand{\udag}{u^{\dagger}}
\newcommand{\dd}{\,\mathrm{d}}
\newcommand{\subjectto}{\text{\quad s.t.\quad }}

\DeclareMathOperator{\diam}{diam}
\DeclareMathOperator{\id}{Id}

\DeclareMathOperator*{\argmin}{argmin}
\DeclareMathOperator{\ind}{\mathcal{I}}

\DeclareMathOperator{\prox}{prox}
\DeclareMathOperator{\proj}{proj}
\DeclareMathOperator{\gap}{gap}
\let\div\relax
\DeclareMathOperator{\grad}{\nabla}
\DeclareMathOperator{\div}{div}
\DeclareMathOperator{\symgrad}{\mathcal{E}}
\DeclareMathOperator{\Hess}{Hess}
\DeclareMathOperator{\TV}{TV}
\DeclareMathOperator{\TGV}{TGV}
\DeclareMathOperator{\CTGV}{CTGV}

\DeclareMathOperator{\DGTV}{DGTV}
\DeclareMathOperator{\DGTGV}{DGTGV}

\title{Denoising of image gradients and total generalized variation denoising
\thanks{This material was based upon work partially supported by the National Science Foundation under Grant DMS-1127914 to the Statistical and Applied Mathematical Sciences Institute. 
    Any opinions, findings, and conclusions or recommendations expressed in this material are those of the author(s) and do not necessarily reflect the views of the National Science Foundation.}}
\author{B. Komander \and D. A. Lorenz \and L. Vestweber}
\institute{Birgit Komander\at Institute of
  Analysis and Algebra, TU Braunschweig, 38092 Braunschweig, Germany,
  \email{b.komander@tu-braunschweig.de}
\and D. A. Lorenz\at Institute of
  Analysis and Algebra, TU Braunschweig, 38092 Braunschweig, Germany,
  \email{d.lorenz@tu-braunschweig.de}
\and Lena Vestweber \at Institut 
  Computational Mathematics, AG Numerik, TU Braunschweig, 38092 Braunschweig, Germany,
  \email{l.vestweber@tu-braunschweig.de}}

\begin{document}
\maketitle

\begin{abstract}
  We revisit total variation denoising and study an augmented model where we assume that an estimate of the image gradient is available. We show that this increases the image reconstruction quality and derive that the resulting model resembles the total generalized variation denoising method, thus providing a new motivation for this model. Further, we propose to use a constraint denoising model and develop a variational denoising model that is basically parameter free, i.e. all model parameters are estimated directly from the noisy image.
  
  Moreover, we use Chambolle-Pock's primal dual method as well as the Douglas-Rachford method for
  the new models. For the latter one has to solve large discretizations of partial differential
  equations. We propose to do this in an inexact manner using the preconditioned conjugate gradients
  method and derive preconditioners for this. Numerical experiments show that the resulting method
  has good denoising properties and also that preconditioning does increase convergence speed
  significantly. Finally we analyze the duality gap of different formulations of the TGV denoising problem
  and derive a simple stopping criterion.
\end{abstract}

\keywords{image denoising, gradient estimate, total generalized variation, Douglas-Rachford method, preconditioning}

\section{Introduction}
\label{sec:intro}
In this work we revisit variational denoising of images with total variation penalties, dating back to the classical Rudin-Osher-Fatemi total variation denoising method~\cite{rudin1992rofmodel}.
We start by augmenting the model with an estimate of the image gradient and analyze, how this helps for image denoising.
This is related to the method of first estimating image normals and then using this estimate for a better image denoising, an approach proposed by Lysaker et al.~\cite{Lysaker2004}.
As we will see, a combined approach, which tries to estimate the  gradient of the denoised image and the denoised image itself simultaneously, is very close to the successful total generalized variation denoising from Bredies et al.~\cite{bredies2010total}.
A brief introduction to this idea was already proposed in~\cite{komander2017denoising}.
Further, we will propose different (in some sense equivalent) versions of the total general variation denoising method (one of these, CTGV, already introduced in~\cite{komander2017denoising}) which have several advantages over the classical one:
First, we are going to work with constraints in contrast to penalties, which, in some cases, allows for a simple, clean and effective parameter choice.
Second, different formulations of these problems lead to different dual problems and hence, different algorithms and some of these turn out to be a little simpler regarding duality gaps and stopping.
Moreover, the different models show slightly different numerical performance.
Finally, we will make use of the Douglas-Rachford method to solve these minimization problems.
This involves the solution of large discretizations of linear partial differential equations and we
will develop simple and effective preconditioners for these equations.
In contrast to~\cite{bredies2015preconditioned} where the authors use classical linear splitting methods for the inexact
solution of the linear equations, we propose to use a few iterations of the preconditioned conjugate gradient method.

The paper is organized as follows. Section~\ref{sec:two-stage-denoising} motivates denoising of
gradients as a mean to improve total variation denoising and derives several new variational methods. Then, section~\ref{sec:gaps} investigates the corresponding duality gaps of the problems and section~\ref{sec:numerics} deals with the numerical treatment and especially with the Douglas-Rachford method and efficient preconditioners for the respective linear subproblems. Section~\ref{sec:experiments} reports numerical experiments and section~\ref{sec:conclusion} draws some conclusions.

\section{Total variation denoising with estimates of the gradient}
\label{sec:two-stage-denoising}

Since its introduction in 1992, the Rudin-Osher-Fatemi model \cite{rudin1992rofmodel}, 
also known as total variation denoising, has found numerous applications.
One way to put this model is that the total variation of an image is used as a regularizer for an image denoising 
optimization problem, in general $\min_x F(x) + G(Kx)$, with $u_0$ as the input image, defined on a domain $\Omega$ as
\begin{align}\label{eq:ROF}
\begin{split}
	&\min_u \lambda \int_{\Omega} \abs{\grad u}\dd x
		+ \frac{1}{2}\int_{\Omega} \abs{u(x)-u_0(x)}^2\dd x\\
	=&\min_u \lambda \norm[1]{\abs{\grad u}} 
		+ \frac{1}{2}\norm[2]{u-u_0}^2.
\end{split}	
\end{align}
One problem in the resulting denoised images is the occurring 
staircasing effect, i.e. the creation of flat 
areas separated by jumps.
One way to overcome this staircasing, proposed by Lysaker et al.~\cite{Lysaker2004}, 
is an 
image denoising technique in two steps.
There, in a first step, a 
total variation filter was used to smooth the normal vectors of the level 
sets of a given noisy image and then, as a second step,  
a surface was fitted to the resulting normal vectors.
The method was formulated in a dynamic way, i.e. by solving a certain partial differential equation to steady state.
A similar approach has been taken in~\cite{komander2014} for a problem of 
deflectometric surface measurements where the measurement device 
does not only produce approximate point coordinates but also 
approximate surface normals. 
It turned out that the incorporation of the surface normals 
results in an effective, but fairly complicated and non-linear problem.
Switching from surface normals to image gradients, however, turns 
the problem into a ``more linear'' one and leads to an equally effective method, see~\cite{komander2014}. 

In this section we follow the idea of introducing additional 
information, i.e. gradient information, into the ROF-model~\eqref{eq:ROF}
in order to prevent or reduce the  staircasing effect. 

\subsection{Denoising with prior knowledge on the gradient}
\label{sec:denoising-wit-gradient}

Consider the image model $u_0 = \udag + \eta$, where $u_0$ is the 
given noisy image, $\udag$ is the ground truth, i.e. the noise-free 
image, and $\eta$ is the additional Gaussian white noise.
In the situation of images, there are methods to obtain a reasonable estimate of the amount of noise, i.e. an estimate on $\norm[2]{\udag - u_{0}} = \norm[2]{\eta}$ is available.
One can use, for example the techniques from~\cite{liu2012noise,liu2013single} to estimate the noise level of Gaussian white noise quite accurately from a single image.
Using this information, it seems that 
\begin{align*}
	\norm[2]{u-u_0}\leq \norm[2]{\udag -u_0} = \norm[2]{\eta}
	\eqqcolon \delta_1
\end{align*}
is a sensible condition for the denoised image, since one should not look for an image $u$ further
away from $\udag$ than $u_{0}$. 
This motivates to consider a variant of the ROF model~\eqref{eq:ROF} where the discrepancy $\norm[2]{u-u_{0}}$ is not a penalty in the objective, but taken into account as a constraint.
This leads to a reformulation of the total variation problem as
\begin{align*}
	\min_u \norm[1]{\abs{\grad u}}\subjectto\norm[2]{u-u_0}
	\leq \norm[2]{\udag - u_0} = \norm[2]{\eta}.
\end{align*}
By estimating $\eta$ as Gaussian noise from the given image $u_{0}$ (e.g., using the method from~\cite{liu2012noise,liu2013single}) one obtains a parameter free denoising method.

Next, assume that we have some additional information on the original image $\udag$ available, namely some estimate $v$ of its gradient.
This could be taken into account as
\begin{align}
	\min_u\norm[1]{\abs{\grad u - v}}\subjectto 
	\norm[2]{u-u_0}\leq \norm[2]{\eta}.
\end{align}
It turns out, that this information can be quite powerful.
The next simple lemma shows that if we would know the gradient of $\udag$ and the noise level exactly, our model would recover $\udag$ perfectly, even for arbitrary large noise (and also independent of the type of noise).
\begin{lemma}
  \label{lem:perfect-reconstruction-udag}
  Assume that $\udag$ and $u_{0}$ fulfill $\int_{\Omega}\udag = \int_{\Omega}u_{0}$ and
  let $v = \grad \udag$ and $\delta_{1} = \norm[2]{\udag-u_{0}}$. Then it holds that
  \begin{equation}
    \label{eq:tvgrad-true-delta}
    \udag = \argmin_u \norm[1]{\abs{\grad u-v}}
    \subjectto\norm[2]{u-u_0}\leq \delta_{1},
  \end{equation}
  i.e. $\udag$ is the unique solution of the denoising problem.
\end{lemma}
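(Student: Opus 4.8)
The plan is to show that $\udag$ both satisfies the constraint and attains the smallest possible value of the objective, and that it is the only such point. First I would check feasibility: with $v = \grad\udag$ the objective at $u = \udag$ is $\norm[1]{\abs{\grad\udag - v}} = 0$, and by the definition $\delta_1 = \norm[2]{\udag - u_0}$ the constraint $\norm[2]{\udag - u_0}\le\delta_1$ holds with equality, so $\udag$ is feasible. Since the objective $u\mapsto\norm[1]{\abs{\grad u - v}}$ is nonnegative, the value $0$ attained at $\udag$ is automatically a global minimum; hence $\udag$ is \emph{a} minimizer. The only real content is uniqueness.

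For uniqueness, suppose $u$ is any feasible point with $\norm[1]{\abs{\grad u - v}} = 0$. Then $\grad u = v = \grad\udag$ (almost everywhere), so $\grad(u - \udag) = 0$, which forces $u - \udag$ to be constant on $\Omega$ (assuming $\Omega$ is connected), say $u = \udag + c$. Now I would invoke the hypothesis $\int_\Omega\udag = \int_\Omega u_0$: integrating $u - u_0 = (\udag - u_0) + c$ over $\Omega$ gives $\int_\Omega(u - u_0) = \int_\Omega(\udag - u_0) + c\,\abs{\Omega} = c\,\abs{\Omega}$. The key step is then to pin down $c = 0$ from the constraint. Writing $\norm[2]{u - u_0}^2 = \norm[2]{\udag - u_0}^2 + 2c\int_\Omega(\udag - u_0) + c^2\abs{\Omega} = \delta_1^2 + c^2\abs{\Omega}$, where the cross term vanishes precisely because $\int_\Omega(\udag - u_0) = 0$. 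Feasibility $\norm[2]{u - u_0}\le\delta_1$ then reads $\delta_1^2 + c^2\abs{\Omega}\le\delta_1^2$, forcing $c = 0$ and hence $u = \udag$.

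The main obstacle — really the only delicate point — is the step from $\grad(u - \udag) = 0$ to "$u - \udag$ is constant," which needs $\Omega$ to be connected and needs the right function-space setting (e.g. $u, \udag \in BV(\Omega)$ or $W^{1,1}(\Omega)$) for the Poincaré-type statement to be rigorous; in the discrete setting this is just the statement that the discrete gradient has kernel spanned by the constant image, modulo boundary-condition bookkeeping. Everything else is the elementary expansion of $\norm[2]{\cdot}^2$ above, where the hypothesis $\int_\Omega\udag = \int_\Omega u_0$ is exactly what kills the cross term and makes the constraint force $c=0$. I would present the argument in the continuous setting and remark that the discrete case is identical with $\abs{\Omega}$ replaced by the number of pixels.
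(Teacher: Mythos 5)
Your proposal is correct and follows essentially the same route as the paper: feasibility plus the zero objective value give optimality, and uniqueness follows by writing any other zero-objective point as $\udag + c$ and expanding $\norm[2]{\udag - u_0 + c}^2$, where the assumption $\int_\Omega \udag = \int_\Omega u_0$ kills the cross term and forces $c = 0$. Your explicit remark that passing from $\grad(u-\udag)=0$ to constancy requires $\Omega$ connected (and a suitable function space) is a point the paper leaves implicit, but it does not change the argument.
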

\begin{proof}
  The set of minimizers is
  \begin{align*}
    \argmin_{u}\norm[1]{\abs{\grad u - \grad\udag}}
    \subjectto \norm[2]{u-u_0}
    \leq\norm[2]{\udag - u_0}.
  \end{align*}
  Clearly, $\udag$ is within this set, since the optimal value
  is $0$ and $\udag$ is feasible, because the constraint is 
  trivially fulfilled.
  
  To show that $\udag$ is indeed the unique solution, consider any other $u$ that also produces an objective value of zero.
  This implies $\grad u = \grad\udag$, i.e. $u = \udag + c$ for some constant $c$.
  Thus, $u$ fulfills the constraint $\norm[2]{u-u_{0}}^{2}\leq \norm[2]{\udag-u_{0}}^{2}$ if $\norm[2]{\udag-u_{0} + c}^{2}\leq \norm[2]{\udag-u_{0}}^{2}$.
  We expand the left hand side and get, writing $\abs{\Omega}$ for the measure of $\Omega$,
  \[
  \norm[2]{\udag-u_{0}}^{2} + 2c\int_{\Omega}(\udag-u_{0}) + c^{2}\abs{\Omega} \leq \norm[2]{\udag-u_{0}}^{2}.
  \]
  Since the middle integral vanishes by assumption, we see that $c=0$.\\
  \qed
\end{proof}

The next lemma shows, that $v=\grad\udag$ is also necessary for perfect reconstruction.
\begin{lemma}
  If $\delta_{1} = \norm[2]{\udag-u_{0}}$, $\int_{\Omega}\udag = \int_{\Omega}u_{0}$ and $\udag$ solves~\eqref{eq:tvgrad-true-delta}, then $v=\grad\udag$.
\end{lemma}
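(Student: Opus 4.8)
The plan is to argue by contraposition: suppose $v \neq \grad\udag$ and show that then $\udag$ cannot be a minimizer of~\eqref{eq:tvgrad-true-delta}. Since $\udag$ is always feasible (the constraint $\norm[2]{\udag - u_0} \leq \delta_1$ holds with equality), the objective value it attains is $\norm[1]{\abs{\grad\udag - v}}$, which is strictly positive when $v \neq \grad\udag$. So it suffices to exhibit a feasible competitor $u$ with strictly smaller objective. The natural candidate is a small perturbation $u = \udag + t w$ for a suitable direction $w$ and small $t > 0$, and the key point is that the feasible set $\{u : \norm[2]{u - u_0} \leq \delta_1\}$ is a ball \emph{passing through} $\udag$ on its boundary, so there is a whole halfspace of admissible first-order perturbation directions — namely those $w$ with $\scp{\udag - u_0}{w} \leq 0$ (to first order in $t$), or more carefully those for which the ball constraint can be maintained, which by the same expansion as in the previous proof includes all $w$ with $\scp{\udag - u_0}{w} < 0$ (and small enough $t$), and also $w$ with $\scp{\udag-u_0}{w}=0$ paired with a correction.

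The hard part is handling the nonsmoothness of $u \mapsto \norm[1]{\abs{\grad u - v}}$ at $u = \udag$: one cannot simply differentiate. Instead I would use a subgradient/variational-inequality characterization. If $\udag$ minimizes $\norm[1]{\abs{\grad u - v}}$ over the ball $B = \{u : \norm[2]{u-u_0}\le\delta_1\}$, then the directional derivative of the objective at $\udag$ must be nonnegative in every feasible direction, i.e. for every $w$ in the tangent cone of $B$ at $\udag$. Compute this directional derivative: writing $g = \grad\udag - v$, the one-sided derivative of $\norm[1]{\abs{\grad u - v}}$ at $\udag$ in direction $w$ equals $\int_{\{g\neq 0\}} \tfrac{g}{\abs g}\cdot\grad w + \int_{\{g = 0\}} \abs{\grad w}$. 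The tangent cone at $\udag$ (a boundary point of $B$) is the halfspace $\{w : \scp{\udag - u_0}{w}\le 0\}$, which contains the linear subspace $\{w : \scp{\udag-u_0}{w}=0\}$. Optimality therefore forces the directional derivative to be nonnegative on this subspace, hence (since it is positively homogeneous and the subspace is symmetric) the linear part must vanish on it and the $\int_{\{g=0\}}\abs{\grad w}$ term — being already nonnegative — is unconstrained; more to the point, testing with $w$ and $-w$ on the subspace gives $\int_{\{g\neq 0\}}\tfrac g{\abs g}\cdot\grad w = 0$ and $\int_{\{g=0\}}\abs{\grad w}=0$ for all such $w$, and then a density/approximation argument (the subspace $\scp{\udag-u_0}{\dummy}^\perp$ is large — codimension one) lets one conclude $\grad w$ can be made to align with $g$ on $\{g\neq 0\}$ while keeping the other term controlled, yielding $\norm[1]{\abs{g}} = 0$, i.e. $g = 0$.

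Concretely, I would pick $w$ with $\grad w = -\varphi\,\tfrac{g}{\abs g}$ on a large portion of $\{g\neq 0\}$ for a cutoff $\varphi\ge 0$, adjusted by an additive constant and a gradient-field correction so that $w \perp (\udag - u_0)$ (this uses $\int_\Omega(\udag - u_0) = 0$, exactly as in the preceding lemma, to ensure constants are admissible perturbation directions). For such $w$ the directional derivative is $\leq -\int\varphi\abs g\,\dd x + (\text{boundary/correction terms}) < 0$ once $\abs g \not\equiv 0$ and $\varphi$ is chosen supported where $\abs g$ is bounded away from zero — contradicting optimality of $\udag$. The main obstacle is therefore purely technical: making the perturbation $w$ simultaneously (i) a legitimate element of the space on which the functional is defined, (ii) exactly orthogonal to $\udag - u_0$ so it stays feasible to first order, and (iii) close enough to the descent direction $-g/\abs g$ that the linear term is strictly negative; this is a standard but slightly delicate construction, e.g. via mollification of $g$ and solving an auxiliary Poisson problem for the divergence-free correction. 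I expect the authors' proof to be shorter, likely exploiting that equality in the constraint plus a zero objective pins things down via the same $c^2\abs\Omega$ expansion, combined with a direct perturbation rather than full subdifferential calculus.
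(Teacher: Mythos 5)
Your setup is sound up to the point where you write the one-sided directional derivative
\[
D(w)=\underbrace{\int_{\{g\neq 0\}}\tfrac{g}{\abs{g}}\cdot\grad w\dd x}_{=:L(w)}+\underbrace{\int_{\{g=0\}}\abs{\grad w}\dd x}_{=:N(w)},\qquad g=\grad\udag-v,
\]
and note that optimality forces $D(w)\geq 0$ on the tangent cone. The gap is in the very next step: since $L$ is linear and $N$ is nonnegative and even, testing with $w$ and $-w$ on the hyperplane $\scp{\udag-u_0}{w}=0$ only yields $\abs{L(w)}\leq N(w)$; it does \emph{not} give $L(w)=0$ and $N(w)=0$ separately. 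The term $N(w)$ is exactly the buffer that can absorb the linear decrease produced on $\{g\neq 0\}$, and discarding it discards the entire difficulty of the lemma. Your explicit descent direction is also flawed at its root: you cannot prescribe $\grad w=-\varphi\, g/\abs{g}$ on a portion of $\{g\neq 0\}$, because $g=\grad\udag - v$ is in general not curl-free ($v$ is an arbitrary vector field), so $-\varphi\,g/\abs{g}$ need not be a gradient there at all; the additive constant and the auxiliary Poisson correction you invoke serve the orthogonality constraint, not this obstruction. Carried out correctly, the perturbation analysis delivers precisely the subdifferential system: there exist $t\geq 0$ and $\phi$ with $\abs{\phi}\leq 1$ a.e., $\phi = g/\abs{g}$ on $\{g\neq 0\}$, and $-\div\phi = t(\udag-u_0)$ — and one is still left with showing that this forces $g=0$.

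That last step is where the real work lies, and your proposal contains no substitute for it. The paper derives the same optimality system via Fenchel--Rockafellar duality and then invokes the theorem of Bourgain and Brezis to produce an $L^{\infty}$ solution of $-\div\phi=\udag-u_0$; rescaling gives a dual certificate $\tilde\phi$ with $\abs{\tilde\phi}<1$ \emph{strictly} a.e., whereupon the complementarity condition forces $\grad\udag=v$ a.e. The existence of a uniformly bounded vector field with prescribed divergence is a genuinely nontrivial PDE input, and some ingredient of comparable strength is unavoidable here; the soft density/approximation argument you sketch cannot replace it. (Your closing guess that the authors reuse the $c^{2}\abs{\Omega}$ expansion is also off the mark — that expansion only handles constant perturbations and appears in the preceding lemma, not this one.)
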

\begin{proof}
  Let
  $\udag\in\argmin_u \norm[1]{\abs{\grad u - v}}$ s.t.
  $\norm[2]{u-u_0}\leq\norm[2]{\udag - u_0}$.
  We denote by $\ind_C(x)$ the indicator function of a set $C$,
  and  set $K = \nabla$,
  $F(u) =
  \ind_{\norm[2]{\dummy-u_0}\leq\norm[2]{\udag - u_0}}(u)$, $G(q) = \norm[1]{\abs{q-v}}$
  (i.e. the Fenchel conjugate of $G$ is
  $G^{*}(\phi) \\= \int_{\Omega} v\phi\dd x +
  \ind_{\norm[1]{\abs{\dummy}}\leq 1}(\phi)$).
  The characterization of optimality by the Fenchel-Rockafellar optimality system~\cite[Remark 4.2]{ekeland1976convex}
  shows that $(u^{*},\phi^{*})$ is a primal-dual optimal pair if and only if
  \begin{equation*}
    \begin{cases}
      0 \in K^*\phi^* + \partial F(u^*),\\
      0 \in -Ku^* + \partial G^*(\phi^*),
    \end{cases}
  \end{equation*}
  which amounts to the inclusions
  \begin{align*}
    &\begin{cases}
      0 \in K^*\phi^* + \partial F(u^*),\\
      0 \in -Ku^* + \partial G^*(\phi^*),
    \end{cases}\\
    \Leftrightarrow 
    &\begin{cases}
      0 \in K^*\phi^* + \partial \ind_{\norm[2]{\dummy-u_0}
        \leq\norm[2]{\udag - u_0}}(u^*),\\
      0 \in -Ku^* + v + \partial \ind_{\norm[1]{\abs{\dummy}}
        \leq 1}(\phi^*).
    \end{cases}
  \end{align*}
  That means, that $(\udag,\phi^*)$ is optimal if and only if
  \begin{align*}
    \begin{cases}
      0 \in -\div\phi^* + \partial \ind_{\norm[2]{\dummy-u_0}
        \leq\norm[2]{\udag - u_0}}(\udag),\\
      0 \in -\grad\udag + v + \partial 
      \ind_{\norm[1]{\abs{\dummy}}
        \leq 1}(\phi^*).
    \end{cases}
  \end{align*}
  Since $\udag$ is on the boundary of the domain of the indicator function in the first inclusion, the subgradient there is the normal cone, which implies
  \begin{align*}
    &\begin{cases}
      \exists\, t\geq 0\,:\, 0 = \div\phi^* + t(\udag - u_0),\\
      \begin{cases}
        \grad \udag (x) = v(x),\\
        \qquad\text{if}\ \abs{\phi^*(x)} < 1,\\
        \exists\, s(x) \geq 0\,:\, \grad \udag(x) - v(x) = 
        s(x)\phi^*(x), \\
        \qquad\text{if}\ \abs{\phi^*(x)} = 1.
      \end{cases}
    \end{cases}
  \end{align*}
  By a result of Bourgain and Brezis~\cite[Proposition 1]{bourgain2003divYf} there is an $L^{\infty}$ solution $\phi$ of $-\div\phi = \udag - u_0$, i.e. there exists $L>0$ such that $\abs{\phi}\leq L$ a.e. and hence for $\tilde{\phi} = \phi/(L+1)$ it holds that $-\div
  \tilde{\phi} = \tfrac1{L+1}(\udag - u_0)$ and  $\abs{\tilde\phi}
  < 1$ a.e. Hence, $v = \grad \udag$ a.e.\hfill\qed
\end{proof}

\begin{remark}
  If the condition $\int_{\Omega}\udag = \int_{\Omega}u_{0}$ in Lemma~\ref{lem:perfect-reconstruction-udag} does not hold, but $\int_{\Omega}(\udag-u_{0}) = \varepsilon$,
  then the proof of that lemma still shows that all solutions of~\eqref{eq:tvgrad-true-delta} are of the form $\udag + c$ with $\abs{c-\tfrac{\varepsilon}{\abs{\Omega}}}\leq \tfrac{\varepsilon}{\abs{\Omega}}$.
\end{remark}

If $v\neq\grad \udag$, then any solution $\tilde u$ of \eqref{eq:tvgrad-true-delta} will usually be different from $\udag$,
although, it will fulfill the trivial estimate
\begin{align*}
  \norm[2]{\tilde u - \udag} & \leq\norm[2]{\tilde u-u_{0}} + \norm[2]{u_{0}-\udag}\\
                          &\leq 2\norm[2]{u_{0}-\udag} = \delta_1.
\end{align*}
However, the following lemma shows that $\tilde u\to \udag$ for $v\to\grad\udag$ (for constant noise level $\delta_{1}$):
\begin{lemma}
  Assume $\Omega$ is convex, $\udag$ and $u_{0}$ fulfill $\int_{\Omega}\udag = \int_{\Omega}u_{0}$, let $v$
  fulfill $\norm[1]{\abs{v - \grad \udag}}\leq \varepsilon$ and assume
  that there exists a solution $\tilde u$
  of~\eqref{eq:tvgrad-true-delta} with
  $\delta_{1} = \norm[2]{\udag-u_{0}}$ for which the constraint is
  active (i.e.  $\norm[2]{\tilde u-u_{0}} = \delta_{1}$).
  
  Then there exists another solution $\bar u$ of~\eqref{eq:tvgrad-true-delta}
  with $\delta_{1} = \norm[2]{\udag-u_{0}}$ that fulfills
  $\int_{\Omega}\bar u = \int_{\Omega}\udag$ and moreover, it holds
  that
  \[
  \norm[2]{\bar u-\udag}\leq \diam(\Omega)\varepsilon
  \]
  where $\diam(\Omega)$ denotes the diameter of $\Omega$.
\end{lemma}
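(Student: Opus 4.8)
The plan is to obtain $\bar u$ from the given minimizer $\tilde u$ by adding a suitable constant, to verify that this shift keeps $\bar u$ feasible (hence optimal), and then to control the distance to $\udag$ via a Poincar\'e-type inequality on the convex domain $\Omega$.

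First I would set $\bar u = \tilde u + c$ with $c = \tfrac{1}{\abs{\Omega}}\int_{\Omega}(\udag - \tilde u)$, so that $\int_{\Omega}\bar u = \int_{\Omega}\udag$ by construction. Since the objective in~\eqref{eq:tvgrad-true-delta} depends only on $\grad u$ and $\grad\bar u = \grad\tilde u$, the objective value of $\bar u$ equals the optimal value, so it remains to check feasibility. Expanding analogously to the proof of Lemma~\ref{lem:perfect-reconstruction-udag},
\[
  \norm[2]{\bar u-u_{0}}^{2} = \norm[2]{\tilde u-u_{0}}^{2} + 2c\int_{\Omega}(\tilde u-u_{0}) + c^{2}\abs{\Omega},
\]
and using $\int_{\Omega}\udag=\int_{\Omega}u_{0}$ one gets $\int_{\Omega}(\tilde u-u_{0}) = \int_{\Omega}(\tilde u-\udag) = -c\abs{\Omega}$, so the last two terms combine to $-c^{2}\abs{\Omega}\le 0$. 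With the active-constraint hypothesis $\norm[2]{\tilde u-u_{0}}=\delta_{1}$ this yields $\norm[2]{\bar u-u_{0}}\le\delta_{1}$, so $\bar u$ is feasible and hence a solution of~\eqref{eq:tvgrad-true-delta}.

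For the error bound, write $w = \bar u-\udag$, which has vanishing mean over $\Omega$. Since $\udag$ is feasible for~\eqref{eq:tvgrad-true-delta} (indeed $\norm[2]{\udag-u_{0}}=\delta_{1}$), optimality of $\tilde u$ gives $\norm[1]{\abs{\grad\tilde u-v}}\le\norm[1]{\abs{\grad\udag-v}}\le\varepsilon$; writing $\grad w = (\grad\tilde u-v)-(\grad\udag-v)$ and using the triangle inequality then gives $\norm[1]{\abs{\grad w}}\le 2\varepsilon$. Finally, since $\Omega$ is convex, a Poincar\'e--Wirtinger-type inequality for zero-mean functions turns this gradient bound into $\norm[2]{w}\le\diam(\Omega)\varepsilon$, which is the claim.

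The step I expect to be the main obstacle is this last one: one needs a Poincar\'e-type inequality that bounds the $L^{2}$-norm of the zero-mean function $w$ by the $L^{1}$-norm of $\abs{\grad w}$ with a constant exactly equal to $\tfrac12\diam(\Omega)$, so that the factor $2$ from the triangle inequality is absorbed. On convex domains sharp constants of this flavor are available --- for instance the optimal $L^{1}$-Poincar\'e constant $\tfrac12\diam(\Omega)$, or the Payne--Weinberger constant $\tfrac1{\pi}\diam(\Omega)$ for the $L^{2}$ version combined with an appropriate norm inequality --- but one has to be careful about the normalization linking the mixed $L^{1}$/$L^{2}$ norms to $\abs{\Omega}$; in the discrete image setting it is cleaner to combine $\norm[2]{w}\le\norm[1]{w}$ on the pixel grid with a discrete Poincar\'e inequality. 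All remaining computations are routine.
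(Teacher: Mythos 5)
Your argument is correct and coincides with the paper's proof step for step: the same shift $\bar u=\tilde u+c$ with $c=\int_{\Omega}(\udag-\tilde u)/\abs{\Omega}$, the same feasibility computation yielding $\norm[2]{\bar u-u_0}^2=\norm[2]{\tilde u-u_0}^2-c^2\abs{\Omega}$, and the same combination of optimality of $\bar u$, the triangle inequality, and the sharp $L^1$ Poincar\'e--Wirtinger constant $\diam(\Omega)/2$ on convex domains to absorb the factor $2$. Your concern about the final norm is well founded: the Acosta--Dur\'an inequality controls only $\norm[1]{\bar u-\udag}$, and indeed the paper's own proof concludes with the $1$-norm although the lemma is stated with the $2$-norm, so the $L^{2}$ claim as written would need the extra (discrete or embedding) step you describe.
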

\begin{proof}
  To obtain $\bar u$ we consider $\bar u = \tilde u + c$ for a
  suitable constant $c$. The equality
  $\int_{\Omega}\bar u = \int_{\Omega}\udag$ is achieved for
  $c = \int_{\Omega}(\udag - \tilde u)/\abs{\Omega}$. Since
  $\grad\bar u = \grad \tilde u$ holds, $\bar u$ is optimal
  for~\eqref{eq:tvgrad-true-delta} as soon as it is feasible. To check
  feasibility we calculate
  \begin{align*}
    \norm[2]{\bar u - u_{0}}^{2} & = \norm[2]{\tilde u - u_{0} + c}^{2}\\
    & = \norm[2]{\tilde u - u_{0}}^{2} + 2c\int_{\Omega}(\tilde u - u_{0}) + c^{2}\abs{\Omega}.
  \end{align*}
  Since
  \begin{align*}
  \int_{\Omega}(\tilde u-u_{0}) & = \int_{\Omega}(\tilde u - \udag) + \int_{\Omega}(\udag - u_{0})\\
    & = -c\abs{\Omega} + 0,
  \end{align*}
  we get that
  \begin{align*}
    \norm[2]{\bar u  - u_{0}}^{2} & = \norm[2]{\tilde u - u_{0}}^{2}-c^{2}\abs{\Omega}
  \end{align*}
  which shows feasibility of $\bar u$.
  
  Now we use the Poincar\'e-Wirtinger inequality in $L^{1}$ for which the optimal constant is known from~\cite{acosta2004optimal} to be $\diam(\Omega)/2$, i.e. it holds that
  \begin{align*}
    \norm[1]{\bar u - \udag} 
    &\leq \tfrac{\diam(\Omega)}{2}\norm[1]{\abs{\grad(\bar u - \udag)}} \\
    &\leq \tfrac{\diam(\Omega)}{2}(\norm[1]{\abs{\grad \bar u - v}} 
      + \norm[1]{\abs{v-\grad \udag}}).
  \end{align*}
  By optimality of $\bar u$ and feasibility of $\udag$ we get
  $\norm[1]{\abs{\grad \bar u - v}}\\
  \leq \norm[1]{\abs{\grad\udag -
      v}}$ and hence
  \[
  \norm[1]{\bar u - \udag} \leq  \diam(\Omega) \varepsilon.\hfill\qed
  \]
\end{proof}

\subsection{Denoising of image gradients}
\label{sec:denoising-of-gradients}

The previous lemma shows that any approximation $v$ of the true gradient $\grad\udag$ is helpful for total variation denoising according to~\eqref{eq:tvgrad-true-delta}.
In order to determine such a $v$, one way is to denoise 
the gradient of the input image, specifically, by a variational method
with a smoothness penalty for the gradient and some discrepancy term. 
Naturally, a norm of the derivate 
of the gradients can be used. A first candidate could be the 
Jacobian of the gradient, i.e.
\begin{align*}
	J(\grad u) = \pmat{
					\partial_1(\partial_1 u) 
					& \partial_2(\partial_1 u)\\
					\partial_1(\partial_2 u) 
					& \partial_2(\partial_2 u)
				}
\end{align*}
which amounts to the Hessian of $u$. Thus, the matrix is 
symmetric as soon as $u$ is twice continuously differentiable.
However, notice, that the Jacobian of an arbitrary vector field
is not necessarily symmetric and hence using $\norm{J(v)}$
as smoothness penalty seems unnatural. Instead, we could use the symmetrized Jacobian,
\begin{align*}
	\symgrad(v) = \pmat{
				\partial_1 v_1
				& \tfrac{1}{2}(\partial_1 v_2 + \partial_2 v_1)\\
				\tfrac{1}{2}(\partial_1 v_2 + \partial_2 v_1)
				& \partial_2 v_2
				},
\end{align*}
where $v_1$ and $v_2$ are the components of $v$. 
Note that for twice differentiable $u$ we have
\begin{align*}
	\symgrad(\grad u) = J(\grad u) = \Hess(u),
\end{align*}
i.e. in both cases we obtain the Hessian of $u$. 
Imitating the $\TV$-seminorm (and also followoing the idea of total generalized variation), we take
$F(v) = \norm[1]{\abs{\symgrad v}}$.

Similar to the constraint in~\eqref{eq:tvgrad-true-delta} the 
denoised gradient should not differ more from the true gradient
than $\grad u_0$, thus, we consider the minimization problem with a constraint 
\begin{align*}
	\min_v\norm[1]{\abs{\symgrad(v)}}\subjectto
		\norm[1]{\abs{\grad u_0 - v}}
		\leq \delta_{2}
\end{align*}
The parameter $\delta_{2}$ can be chosen as follows: If we set $\delta_{2} \coloneqq
c\norm[1]{\abs{\grad u_0}}$, then $c=1$ would allow the trivial minimizer $v=0$ and any $c<1$ will enforce some structure of $\grad u_{0}$ onto the minimizer and smaller $c$ leads to less denoising.

Putting the pieces together, we arrive at a two-stage denoising 
method:
\begin{enumerate}
\item	Choose $0<c<1$ and calculate a denoised gradient by solving
		\begin{align}\label{eq:DGTV_1}
		\begin{split}
			\hat{v} \in &\argmin_v \norm[1]{\abs{\symgrad(v)}}\\
				&\subjectto\norm[1]{\abs{\grad u_0 - v}}
				\leq c\norm[1]{\abs{\grad u_0}}\eqqcolon
				\delta_2.
		\end{split}
		\end{align}
\item Denoise $u_{0}$ by solving
		\begin{align}\label{eq:DGTV_2}
		\begin{split}
			\hat{u} \in &\argmin_u \norm[1]{\abs{\grad u - \hat{v}}}\\
				&\subjectto  \norm[2]{u-u_0}\leq 
				\norm[2]{\eta}\eqqcolon \delta_1.
		\end{split}
		\end{align}
\end{enumerate}
Instead of using the constrained formulation of the first 
problem, we can also use a penalized formulation. Thus, the 
gradient denoising problem writes as:
\begin{enumerate}
\item Choose $\alpha>0$ and calculate a denoised gradient by solving
		\begin{align}\label{eq:DGTGV_1}
			\hat{v} &= \argmin_v \norm[1]{\abs{\grad u_0 - v}}
				+ \alpha \norm[1]{\abs{\symgrad(v)}}
		\end{align}
\item Denoise $u_{0}$ by solving~\eqref{eq:DGTV_2}.
\end{enumerate}

Since we use a denoised gradient prior to apply total variation denoising, we term the method~\eqref{eq:DGTV_1} and~\eqref{eq:DGTV_2} Denoised Gradient Total Variation (DGTV).
Due to the similarity with total generalized variation, we call the second method based on~\eqref{eq:DGTGV_1} and~\eqref{eq:DGTV_2} Denoised Gradient Total Generalized Variation (DGTGV).

\subsection{Constrained and Morozov total generalized variation}
\label{sec:ctgv}

Both two-stage denoising methods DGTV and DGTGV for the gradient resemble previously known methods: The latter is related to total generalized variation (TGV)~\cite{bredies2010total} while the former to constrained total generalized variation (CTGV)~\cite{komander2017denoising}.
The TGV of second order, defined in~\cite{bredies2010total} has been shown to be equal to
\begin{align}\label{eq:TGV}
	\TGV^2_{(\alpha_0,\alpha_1)} (u) 
	= \min_v\, \alpha_1\norm[1]{\abs{\grad u - v}} + 
		\alpha_0\norm[1]{\abs{\symgrad(v)}}
\end{align}
in~\cite{bredies2013properties,bredies2010total} while  CTGV from~\cite{komander2017denoising} is defined as 
\begin{align}\label{eq:CTGV}
	\CTGV_{\delta} (u) 
	= \min_v\Big\{ \norm[1]{\abs{\symgrad(v)}}\subjectto \norm[1]{\abs{\grad u - v}}\leq\delta\Big\}.
\end{align}
Considering $\grad u$ to be the given data in these problems, one could say, following the notion from~\cite{lorenz2013neccond}, that TGV is a Tikhonov-type estimation of $\grad u$ while CTGV is a Morozov-type estimation of $\grad u$.

Now, combining the two steps of DGTGV into one optimization 
problem, where in each step the image as well as the gradient is 
updated simultaneously, we get 
\begin{align}\label{eq:mtgv}
\begin{split}
	&\min_{u} \Big\{\TGV^2_{(\alpha,1)} (u) 
		\subjectto \norm[2]{u-u_0}\leq \delta_1\Big\}\\
	=&\min_{u,v} \Big\{\norm[1]{\abs{\grad u - v}} 
		+ \alpha\norm[1]{\abs{\symgrad(v)}}\\
		&\qquad\qquad\subjectto \norm[2]{u-u_0}\leq \delta_1\Big\}.
\end{split}
\end{align}
This formulation is a Morozov type formulation of the TGV problem
\begin{equation}\label{eq:minTGV}
  \min_{u} \tfrac12\norm[2]{u-u_{0}}^{2} + \TGV^2_{(\alpha_{0},\alpha_{1})} (u) 
\end{equation}
and thus, in the following referred to as MTGV.

Another approach to form a combined optimization problem out of 
DGTGV is to preserve both constraints, i.e. taking~
\eqref{eq:DGTV_1} and~\eqref{eq:DGTV_2} to obtain
\begin{align}\label{eq:ctgv}
\begin{split}
	\min_{u,v}\, \norm[1]{\abs{\symgrad(v)}}
	\subjectto
		&\norm[2]{u-u_0}\leq \delta_1,\\
		&\norm[1]{\abs{\grad u - v}}\leq \delta_2.
\end{split}
\end{align}
In both problem formulations $\delta_1$ again is the 
noise level.
Using~\eqref{eq:CTGV} we see that
problem (\ref{eq:ctgv}) becomes
\begin{align}\label{eq:minCTGV}
	\min_u \, \CTGV_{\delta_2}(u) \subjectto 
	 \norm[2]{u-u_0}\leq\delta_1.
\end{align}

Obviously the TGV, MTGV and the CTGV denoising problems are implicitly
equivalent in the sense that, knowing the solution to one of the problems allows to calculate the respective parameters of one of the other problems such that the solution stays one (cf.~\cite[Theorem 2.3]{lorenz2013neccond} for a general result and~\cite[Lemma 1, Lemma 2]{komander2017denoising} for a result in the case of CTGV and TGV).

\subsection{Parameter choice}
\label{subsec:parameter-choice}

A few words on parameter choice for all methods are in order.
Frequently, a constraint
$\norm[2]{u-u_0}\leq \delta_1$ appears in the problem and
the parameter $\delta_{1}$ has a large influence on the denoising result.
A natural choice is to adapt the parameter to the noise level, i.e.
\begin{align*}
	\delta_1 = \norm[2]{\eta} = \norm[2]{u_{0}-u^{\dag}}.
\end{align*}
For a given discrete image, this number can be estimated as follows:
Under the assumption that the noise is additive Gaussian white noise, the methods
from~\cite{liu2012noise,liu2013single} allow to estimate the standard deviation $\sigma$ of the noise $\eta$.
Then, it is well known that the 2-norm of $\eta$ is estimated ny $\norm[2]{\eta} \approx  \sigma\sqrt{k}$, where $k$ is the number of pixels.

The second parameter in DGTV is $\delta_2 = c
\norm[1]{\abs{\grad u_0}}$, the constraint parameter 
in~\eqref{eq:DGTV_1}. 
Setting $c = 1$ leads to $v = 0$ as a feasible and also optimal 
solution, hence, the gradient we insert into the second 
optimization problem is zero and thus the second problem becomes 
pure total variation denoising without any additional 
information. Therefore, $c\in (0,1)$ is a reasonable choice.
Experiments showed (cf. section~\ref{sec:experiments}) that a lot 
of gradient denoising, i.e. smoothening of $\grad u_0$, leads to 
good reconstructions of the image in the second step. Thus, we set 
$c\approx 0.99$. 

The method DGTGV, the penalized variant of the two-stage method, includes the parameter $\alpha$, the penalization parameter 
in~\eqref{eq:DGTGV_1}. Again experiments showed that
$\alpha = 1$ leads to a good image reconstruction (cf. section
~\ref{sec:experiments}), independent of the noise level, the image size or the image type.

Numerical experiments on the performance of the two-stage methods with regard to image quality and
computational speed can be found in section~\ref{sec:experiments}.

Also the problems CTGV-, TGV-, and MTGV-denoising come with two parameters each that have to be chosen.
For CTGV- and MTGV-denoising the choice for $\delta_{1}$ is the same as above.
For the parameter $\alpha$ in MTGV-denoising~\eqref{eq:mtgv} we have experimental experience that hints that $\alpha \approx 2$ is a good universal parameter (cf.~section~\ref{sec:experiments}).
This inline with the usual recommendation that $\alpha_1 = 2\alpha_0$ is a good choice for TGV denoising from~\eqref{eq:minTGV}~cf.~\cite{KnollBrediesTGV2011}.
For the remaining parameter $\delta_{2}$ for CTGV there is following heuristic from~\cite{komander2017denoising}:
We denote with $u^{\TV}$ the $\TV$ denoised image with $\delta_{1}$ chosen according to the noise level estimate from Section~\ref{subsec:parameter-choice} and set $\delta_{2} =c\norm[1]{\abs{\grad u^{\TV}}}$ with $0<c<1$.
However, CTGV will not be included in the experiment in section~\ref{sec:experiments} and the main reason is, that the parameter choice here does lead to inferior results compared to MTGV.

\section{Duality gaps}
\label{sec:gaps} 
Besides the choice of the problem parameters, controlling the denoising, one has to choose a reasonable stopping criterion.
Since all considered problems are convex, the duality gap is a natural candidate.
Fenchel-Rockafellar optimality~\cite{ekeland1976convex} states that, under appropriate conditions, the primal problem 
$\min F(x) + G(Kx)$ has the dual problem \\
$\max_{y}-F^{*}(-K^{*}y) - G^{*}(y)$, that the duality gap, defined as
\[
\gap(x,y) = F(x) + G(Kx) + F^*(-K^{*}y) + G^{*}(y)
\]
is an upper bound on the difference from the current objective value to the optimal one, i.e. $\gap(x,y)\geq F(x) + G(Kx) - F(x^{*}) - G(Kx^{*})$ where $x^{*}$ is a solution of the primal problem, and that the duality gap vanishes exactly at primal-dual optimal pairs.

For the MTGV denoising problem~\eqref{eq:mtgv}
we have the gap function:
\begin{equation}
  \label{eq:gapmtgv}
  \begin{split}
    \gap&_{\operatorname{MTGV}}(u,v,p,q)=\\
    &\norm[1]{\abs{\grad u - v}} + 
    \alpha\norm[1]{\abs{\symgrad(v)}}
    +\ind_{\{\norm[2]{\cdot-u_0}\leq \delta_1\}}(u)\\
    &+\delta_1\norm[2]{\grad^* p} - \langle p, \grad u_0\rangle 
    +\ind_{\{0\}}(p - \symgrad^*q) \\
    &+\ind_{\{\norm[\infty]{\abs{\cdot}}\leq\alpha\}}(p) +
    \ind_{\{\norm[\infty]{\abs{\cdot}}\leq 1\}}(q).
  \end{split}
\end{equation}
As also noted in~\cite{bredies2015preconditioned} (for the TGV denoising problem~\eqref{eq:minTGV}) this gap is not helpful:
The problem is, that the indicator function $\ind_{\{0\}}(p - \symgrad^*q)$ is usually not finite as $p=\symgrad^{*}q$ is usually not fulfilled.
To circumvent this problem, one could simply replace $p$ by $\symgrad^*q$ in the gap function. 
If we do this, we obtain a gap that only depends on $q$:
\begin{equation}
  \label{eq:gapmtgvmod}
  \begin{split}   
    \gap&_{\operatorname{MTGV}}(u,v,q)=\\
    &\norm[1]{\abs{\grad u - v}} + 
    \alpha\norm[1]{\abs{\symgrad(v)}}
    +\ind_{\{\norm[2]{\cdot-u_0}\leq \delta_1\}}(u)\\
    &+\delta_1\norm[2]{\grad^* \symgrad^*q} - \langle \symgrad^*q, \grad u_0\rangle \\
    &+\ind_{\{\norm[\infty]{\abs{\cdot}}\leq\alpha\}}(\symgrad^*q) +
    \ind_{\{\norm[\infty]{\abs{\cdot}}\leq 1\}}(q).
  \end{split}
\end{equation}
We still have the problem, that 
$\ind_{\{\norm[\infty]{\abs{\cdot}}\leq\alpha\}}(\symgrad^*q)$ might be infinite.
So replacing $q$ and $p$ in $\eqref{eq:gapmtgv}$ by
\begin{align*}
  \tilde q&\coloneqq\frac{q}{\max\left(1,\frac{\norm[\infty]{\abs{\symgrad^*q}}}{\alpha}\right)}\\
  \tilde
  p&\coloneqq\frac{\symgrad^*q}{\max\left(1,\frac{\norm[\infty]{\abs{\symgrad^*q}}}{\alpha}\right)}=\symgrad^*\tilde
     q,
\end{align*}
we obtain a finite gap function
which is a valid stopping criterion for the problem:
\begin{lemma}\label{le:gapmtgv}
  Let $\Phi(u,v)\coloneqq\norm[1]{\abs{\grad u - v}} + 
  \alpha\norm[1]{\abs{\symgrad(v)}}
  +\ind_{\{\norm[2]{\cdot-u_0}\leq \delta_1\}}(u)$ be the primal functional of the
  MTGV denoising problem and $(u^*,v^*)$ be a solution of $\min_{u,v}\Phi(u,v)$. Then the
  error of the primal energy can be estimated by
  \begin{equation*}
    \begin{split}
      \Phi&(u,v)-\Phi(u^*,v^*)\\
      \leq &\norm[1]{\abs{\grad u - v}} + 
      \alpha\norm[1]{\abs{\symgrad(v)}}
      +\ind_{\{\norm[2]{\cdot-u_0}\leq \delta_1\}}(u)\\
      &+\delta_1\norm[2]{\grad^* (\symgrad^*\tilde q)} - \langle \symgrad^*\tilde q, \grad
      u_0\rangle + \ind_{\{\norm[\infty]{\abs{\cdot}}\leq 1\}}(\tilde q),
    \end{split}
  \end{equation*}
  where $\tilde
  q\coloneqq\frac{q}{\max\left(1,\frac{\norm[\infty]{\abs{\symgrad^*q}}}{\alpha}\right)}$ and $q$ is any feasible dual variable, i.e. $\norm[\infty]{\abs{q}}\leq 1$.
  Moreover, if $(u^{n},v^{n},q^{n})$ converge to a primal-dual solution, the upper bound converges to zero.
\end{lemma}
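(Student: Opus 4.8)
The plan is to obtain the bound as an instance of Fenchel--Rockafellar weak duality, evaluated at a suitably normalized feasible dual variable. Observe first that~\eqref{eq:gapmtgv} is precisely the duality gap $F(x)+G(Kx)+F^{*}(-K^{*}y)+G^{*}(y)$ of the MTGV problem~\eqref{eq:mtgv} written in the splitting $x=(u,v)$, $y=(p,q)$, with $F(u,v)=\ind_{\{\norm[2]{\dummy-u_{0}}\leq\delta_{1}\}}(u)$, $G$ a weighted sum of two $1$-norms, and $K(u,v)=(\symgrad v,\grad u-v)$; here $p$ is the vector-valued and $q$ the matrix-valued dual variable, the dual constraints being $\norm[\infty]{\abs{p}}\leq\alpha$, $\norm[\infty]{\abs{q}}\leq1$ and $p=\symgrad^{*}q$. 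By the general estimate recalled in Section~\ref{sec:gaps}, applied with $x=(u,v)$, for every primal $(u,v)$ and every dual $(p,q)$ one has
\[
  \Phi(u,v)-\Phi(u^{*},v^{*})\ \leq\ \gap_{\MTGV}(u,v,p,q),
\]
so it suffices to produce a dual point for which the right-hand side collapses to the claimed expression.

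Given an arbitrary $q$ with $\norm[\infty]{\abs{q}}\leq1$, put $M\coloneqq\max\!\bigl(1,\norm[\infty]{\abs{\symgrad^{*}q}}/\alpha\bigr)\geq1$, $\tilde q\coloneqq q/M$ and $\tilde p\coloneqq\symgrad^{*}\tilde q$. Then $(\tilde p,\tilde q)$ annihilates all three dual indicators in~\eqref{eq:gapmtgv}: one has $\tilde p-\symgrad^{*}\tilde q=0$ by construction; $\norm[\infty]{\abs{\tilde p}}=\norm[\infty]{\abs{\symgrad^{*}q}}/M\leq\alpha$ precisely because of the $\max$ defining $M$; and $\norm[\infty]{\abs{\tilde q}}=\norm[\infty]{\abs{q}}/M\leq1$ since $M\geq1$. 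Substituting $(\tilde p,\tilde q)$ into~\eqref{eq:gapmtgv} and using $\tilde p=\symgrad^{*}\tilde q$ in the two terms $\delta_{1}\norm[2]{\grad^{*}\tilde p}$ and $-\scp{\tilde p}{\grad u_{0}}$ leaves exactly the right-hand side asserted in Lemma~\ref{le:gapmtgv}; the term $\ind_{\{\norm[\infty]{\abs{\dummy}}\leq1\}}(\tilde q)$ kept there equals $0$ under the hypothesis $\norm[\infty]{\abs{q}}\leq1$. Combined with the displayed inequality, this proves the first assertion.

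For the \emph{moreover} statement I would argue by continuity. Suppose $(u^{n},v^{n},q^{n})\to(u^{*},v^{*},q^{*})$ with $(u^{*},v^{*})$ solving the primal and $q^{*}$, together with $p^{*}=\symgrad^{*}q^{*}$, solving the dual. Any dual point of finite objective satisfies $p=\symgrad^{*}q$ and $\norm[\infty]{\abs{p}}\leq\alpha$, so in particular $\norm[\infty]{\abs{\symgrad^{*}q^{*}}}\leq\alpha$; hence $M^{n}=\max(1,\norm[\infty]{\abs{\symgrad^{*}q^{n}}}/\alpha)\to 1$ and therefore $\tilde q^{n}\to q^{*}$. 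Since $\grad$, $\grad^{*}$, $\symgrad^{*}$ and the norms and inner product occurring in the bound are continuous, and the primal iterates at which the gap is evaluated stay feasible (as in the Chambolle--Pock and Douglas--Rachford schemes used here, so that $\ind_{\{\norm[2]{\dummy-u_{0}}\leq\delta_{1}\}}(u^{n})=0$), the bound converges to its value at $(u^{*},v^{*},q^{*})$, which is $\gap_{\MTGV}(u^{*},v^{*},\symgrad^{*}q^{*},q^{*})$. This equals $0$ because $(u^{*},v^{*},\symgrad^{*}q^{*},q^{*})$ is a primal--dual optimal quadruple and, as recalled in Section~\ref{sec:gaps}, the Fenchel--Rockafellar gap vanishes exactly at such points. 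The one delicate step is establishing $M^{n}\to1$: this is where dual feasibility of the limit enters, and it is precisely the reason the normalization uses $\max(1,\cdot)$ rather than $\norm[\infty]{\abs{\symgrad^{*}q}}/\alpha$ directly.
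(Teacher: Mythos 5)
Your proof is correct and takes essentially the same route as the paper's: weak duality makes the gap an upper bound for any choice of dual variable, the normalized pair $\tilde p=\symgrad^*\tilde q$, $\tilde q=q/M$ is chosen so that all dual indicator functions vanish, and the second claim follows by continuity of the gap on its domain. The paper compresses this into two sentences; your version only adds the explicit verification that $M^n\to 1$ (via dual feasibility of the limit) and that the primal iterates remain feasible, both of which the paper leaves implicit.
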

\begin{proof}
  The estimate is clear since any dual value is smaller or equal than any primal value. So we can change
  the dual variables in the gap as we like.
  
  Since the gap function is continuous on its domain, the sequence $(u^{n},v^{n},q^{n})$ converges to a primal-dual solution and has the same limit as $(u^{n},v^{n},\tilde q^{n})$, the second claim follows.\qed
\end{proof}
The remaining indicator functions that are left in the modified gap are guaranteed to be finite by several algorithms (namely ones that use projections onto the respective constraints).
As we will see later, this holds, for example, for classical methods like
Douglas-Rachford and Chambolle-Pock.

Similar problems with an infinite duality gap appear in other problems, too, e.g. in the CTGV denoising~\eqref{eq:minCTGV}.
A closer look at the  MTGV denoising problem \eqref{eq:mtgv} reveals that the above construction of the modified gap can indeed be avoided by a different choice of variables and that this holds for a broad class of problems.
First, we illustrate this for the MTGV problem. 
The MTGV problem does not change if we introduce a new variable $w=\grad u -v$ and replace $v$ in the
formulation. We obtain:
\begin{equation}\label{eq:mtgvvarchange}
\begin{split}
	\min_{u,w}\, \norm[1]{\abs{w}} + \alpha\norm[1]{\abs{\symgrad(\grad u - w)}}
	\subjectto 
	&\norm[2]{u-u_0}\leq \delta_1.
\end{split}
\end{equation}
For this problem, the gap function is
\begin{equation}
  \label{eq:gapmtgvvarchange}
  \begin{split}
    \gap&_{\operatorname{MTGV}}(u,w,q)=\\
    &\norm[1]{\abs{w}} + 
    \alpha\norm[1]{\abs{\symgrad(\grad u -w)}}
    +\ind_{\{\norm[2]{\cdot-u_0}\leq \delta_1\}}(u)\\
    &+\delta_1\norm[2]{\grad^* (\symgrad^*(q))} - \langle \symgrad^*q, \grad u_0\rangle \\ 
    &+
    \ind_{\{\norm[\infty]{\abs{\cdot}}\leq\alpha\}}(\symgrad^*q) +
    \ind_{\{\norm[\infty]{\abs{\cdot}}\leq 1\}}(q),
  \end{split}
\end{equation}
which is exactly the same as $\eqref{eq:gapmtgvmod}$. As above the indicator function 
$\ind_{\{\norm[\infty]{\abs{\cdot}}\leq\alpha_0\}}(\symgrad^*q)$ might be
infinite and therefore $q$ should be replaced by $\tilde q$ from above. 
So replacing $p$ by $\symgrad^*q$ in $\eqref{eq:gapmtgv}$ results in the gap function of
$\eqref{eq:mtgvvarchange}$. Both gap functions can be used as valid stopping criteria for
both problem formulations.

This method for the stopping criterion does apply to general problems of the form
\begin{align}
  \label{eq:minprob}
  \begin{split}
    \min_{u,v} F(u) + G(Av) + H(Bu-v),
  \end{split}
\end{align}
where $A$ and $B$ are linear (and standard regularity conditions, implying Fenchel-Rockafellar duality is fulfilled). 
The dual problem is
\begin{align*}
  \max_{p,q} -F^*(-B^*p) - H^*(p) - G^*(q) + \ind_{\{0\}}(p - A^*q).
\end{align*}
  If we replace $p$ by $A^*q$, we obtain
  \begin{align*}
    \max_q -F^*(-B^*A^*q) - H^*(A^*q) - G^*(q),
  \end{align*}
  which is the dual problem of $\eqref{eq:minprob}$ with variable change $w=Bu-v$:
  \begin{align*}
    \min_{u,w}	F(u) + G(A(Bu-w)) + H(w).
  \end{align*}

  The TGV denoising problem for example is very similar to MTGV. The gap is:
   \begin{align*}
     \gap&_{\operatorname{TGV}}(u,v,p,q)=\\
   &\alpha_1\norm[1]{\abs{\grad u - v}} + 
		\alpha_0\norm[1]{\abs{\symgrad(v)}}
		+\frac{1}{2}\norm[2]{u-u_0}^2\\
		&+\frac{1}{2}\norm[2]{\grad^* p}^2 - \langle p, \grad u_0\rangle 
		+\ind_{\{0\}}(p - \symgrad^*q) \\
		&+\ind_{\{\norm[\infty]{\abs{\cdot}}\leq\alpha_0\}}(p) +
		\ind_{\{\norm[\infty]{\abs{\cdot}}\leq\alpha_1\}}(q).
     \end{align*}
     With $\tilde p$ and $\tilde q$ as before one gets a simple gap for TGV:
   \begin{align*}
     \gap&_{\operatorname{TGV}}(u,v,\tilde q)=\\
           &\alpha_1\norm[1]{\abs{\grad u - v}} + 
		\alpha_0\norm[1]{\abs{\symgrad(v)}}
		+\frac{1}{2}\norm[2]{u-u_0}^2\\
		&+\frac{1}{2}\norm[2]{\grad^* (\symgrad^*\tilde q)}^2 - \langle \symgrad^*\tilde q, \grad
		u_0\rangle + \ind_{\{\norm[\infty]{\abs{\cdot}}\leq\alpha_1\}}(\tilde q).
	      \end{align*}
 \begin{corollary}\label{le:gaptgv}
	Let $\Phi(u,v)\coloneqq\norm[1]{\abs{\grad u - v}} + 
		\alpha\norm[1]{\abs{\symgrad(v)}}
		+\frac{1}{2}\norm[2]{u-u_0}^2$ be the primal functional of the
		TGV denoising problem and $(u^*,v^*)$ be a solution of $\min_{u,v}\Phi(u,v)$. Then the
		error of the primal energy can be estimated by
		\begin{equation*}
		  \begin{split}
		  \Phi&(u,v)-\Phi(u^*,v^*)\\
		  &\leq 
	  \alpha_1\norm[1]{\abs{\grad u - v}} + 
		\alpha_0\norm[1]{\abs{\symgrad(v)}}
		+\frac{1}{2}\norm[2]{u-u_0}^2\\
		&+\frac{1}{2}\norm[2]{\grad^* (\symgrad^*\tilde q)}^2 - \langle \symgrad^*\tilde q, \grad
		u_0\rangle + \ind_{\{\norm[\infty]{\abs{\cdot}}\leq\alpha_1\}}(\tilde q),
	      \end{split}
		\end{equation*}
		where $\tilde
		q\coloneqq\frac{q}{\max\left(1,\frac{\norm[\infty]{\abs{\symgrad^*q}}}{\alpha}\right)}$.
  Moreover, if $(u^{n},v^{n},q^{n})$ converge to a primal-dual solution, the upper bound converges to zero.
  \end{corollary}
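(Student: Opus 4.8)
The plan is to follow exactly the template already established in the proof of Lemma~\ref{le:gapmtgv}, since Corollary~\ref{le:gaptgv} is the verbatim analogue for the Tikhonov (quadratic) data term instead of the Morozov (indicator) one. First I would observe that the TGV denoising problem~\eqref{eq:minTGV} written as a saddle-point problem has primal functional $\Phi(u,v)$ and the associated dual functional is the negative of the remaining terms in $\gap_{\operatorname{TGV}}(u,v,p,q)$; by Fenchel--Rockafellar duality (the regularity conditions being satisfied since $\Phi$ is finite everywhere and coercive), weak duality gives that for \emph{any} primal-feasible $(u,v)$ and \emph{any} dual-feasible $(p,q)$ the dual objective is a lower bound on the primal optimum $\Phi(u^*,v^*)$. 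Hence $\Phi(u,v)-\Phi(u^*,v^*)$ is bounded above by the full gap $\gap_{\operatorname{TGV}}(u,v,p,q)$.

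The second step is the substitution argument: the constraint $p=\symgrad^*q$ coupling the two dual blocks means the $\ind_{\{0\}}(p-\symgrad^*q)$ term is finite precisely when $p=\symgrad^*q$, so I simply pick $p\coloneqq\symgrad^*q$, which kills that indicator and collapses the two remaining indicators $\ind_{\{\norm[\infty]{\abs{\cdot}}\leq\alpha_0\}}(p)$ and $\ind_{\{\norm[\infty]{\abs{\cdot}}\leq\alpha_1\}}(q)$ into $\ind_{\{\norm[\infty]{\abs{\cdot}}\leq\alpha_0\}}(\symgrad^*q)$ and $\ind_{\{\norm[\infty]{\abs{\cdot}}\leq\alpha_1\}}(q)$. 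Because $\symgrad^*q$ need not satisfy the $\alpha_0$-bound, I then rescale by the factor $\max(1,\norm[\infty]{\abs{\symgrad^*q}}/\alpha_0)$ to obtain $\tilde q$ (and $\tilde p=\symgrad^*\tilde q$), which is still dual-feasible in the $\norm[\infty]{\abs{\cdot}}\leq\alpha_1$ sense because dividing by a number $\geq 1$ cannot increase a norm, and now satisfies $\norm[\infty]{\abs{\symgrad^*\tilde q}}\leq\alpha_0$ by construction. Substituting $(\tilde p,\tilde q)$ into the full gap makes all indicator terms vanish and leaves exactly the claimed expression. (Strictly, one should note the paper writes $\alpha$ rather than $\alpha_0$ in the definition of $\tilde q$ in the corollary statement, which is just the notational convention $\alpha=\alpha_0$, $\alpha_1$ absorbed; I would keep the statement's notation.)

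For the convergence claim I would argue, as in Lemma~\ref{le:gapmtgv}, that the map $(u,v,q)\mapsto$ (the displayed upper bound) is continuous on its effective domain, that the rescaling operation $q\mapsto\tilde q$ is continuous, and that if $(u^n,v^n,q^n)$ converges to a primal-dual optimal triple then at the limit the gap is zero (strong duality holds at optimality) and $\tilde q^n$ has the same limit as $q^n$ up to the rescaling, which at an optimal $q$ already satisfies the $\alpha_0$-bound so that the rescaling factor tends to $1$; continuity then forces the upper bound to tend to $0$.

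The only place requiring a little care — and the main obstacle, such as it is — is verifying that the rescaling genuinely preserves \emph{all} the needed feasibility: $\norm[\infty]{\abs{\tilde q}}\leq\alpha_1$ needs $\norm[\infty]{\abs{q}}\leq\alpha_1$ to start with (which is the hypothesis) combined with division by a factor $\geq 1$, and $\norm[\infty]{\abs{\symgrad^*\tilde q}}\leq\alpha_0$ follows directly from the definition of the rescaling factor; one also wants $\symgrad^*$ linearity so that $\symgrad^*\tilde q = \tilde p$. None of this is deep, but it is the substantive content beyond the purely mechanical transcription of the Lemma~\ref{le:gapmtgv} argument, so I would state it explicitly rather than hand-wave it.
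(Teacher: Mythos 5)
Your proposal is correct and follows essentially the same route as the paper: the bound is just weak duality (any feasible dual value lower-bounds the primal optimum, so one may substitute any feasible dual variables into the gap), the choice $p=\symgrad^*\tilde q$ with the rescaled $\tilde q$ kills the problematic indicator terms while preserving dual feasibility, and the convergence claim follows from continuity of the gap on its domain together with the fact that the rescaling factor tends to $1$ at an optimal $q$. The paper treats this as an immediate corollary of Lemma~\ref{le:gapmtgv} and gives no separate proof, so your explicit verification of the feasibility of $\tilde q$ is, if anything, slightly more careful than the source.
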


\section{Numerics}
\label{sec:numerics}

In this section we describe methods to solve the convex optimization problems related to the various denoising methods from the previous sections.
We will work with standard discretizations of the images and the derivative operators, but state
them for the sake of completeness in Appendix~\ref{sec:discretization}.
In this section we focus on the optimization methods.

\subsection{Douglas-Rachford's method}
\label{sec:dr}
The Douglas-Rachford algorithm (see \cite{douglasrachford1956} and \cite{lions1979splitting}) is a splitting algorithm to solve monotone inclusions of the form
$0\in\MA(x)+\MB(x)$, which requires only the resolvents
$\MR_{t\MA} = (I+t\MA)^{-1}$ and $\MR_{t\MB}=(I+t\MB)^{-1}$ but not the resolvent of the sum
$\MA+\MB$.

One way to write down the Douglas-Rachford iteration is as a fixed point iteration
\begin{align*}
  z^k&=F(z^{k-1}),\quad\text{ with}\\
  F(z)&= z+\MR_{t\mathcal{A}}(2\MR_{t\MB}(z)-z)-\MR_{t\MB}(z)
\end{align*}
for some step-size $t>0$.
It is possible to employ relaxation for the Douglas-Rachford iteration as
\begin{equation}
  \label{DR}
  z^k=z^{k-1}+\rho(F(z^{k-1})-z^{k-1}),
  \end{equation}
  where $1<\rho<2$ is overrelaxation and $0<\rho<1$ is
  underrelaxation. For $\rho$ in the whole range from $0$ to $2$ and any $t>0$ it
  holds that the iteration converges to some fixed point $z$ of $F$
  such that $\MR_{t\MB}(z)$ is a zero of $\MA+\MB$, see,
  e.g.~\cite{eckstein1992douglas}.

 The Douglas-Rachford method can be used to solve the saddle point problem
\begin{align}\label{eq:saddleprob}
\begin{split}
	\min_{x}\max_y\ \langle y, K x\rangle + F(x) - G^*(y).
\end{split}
\end{align}
To that end, in~\cite{oconnor2014primal} the authors propose to use the splitting
\begin{align}
  \label{split1}
  \begin{split}
  \MB(x,y)&=\left[\begin{tabular}{c}
    $\partial F(x)$\\
    $\partial G^*(y)$
  \end{tabular}\right],\\
  \MA(x,y)&=
  \left[\begin{tabular}{l r}
    $0$&$K^*$\\
    $-K$&$0$\end{tabular}\right]
    \left[\begin{tabular}{c}
    $x$\\
    $y$\end{tabular}\right].
  \end{split}
\end{align}
If $\MA$ is a matrix the resolvent $\MR_{t\MA}$ simply is the matrix inverse 
$(I+t\MA)^{-1}$. The resolvent of $\MB$ is given by the proximal
mappings of $F$ and $G^*$, namely
\begin{align*}
  \MR_{t\mathcal{B}}(x,y)=\left[\begin{array}{c}(I+t\partial F)^{-1}(x)\\ (I+t\partial
    G^*)^{-1}(y)\end{array}\right]=\left[\begin{array}{c}\prox_{tF}(x)\\\prox_{tG^*}(y)\end{array}\right].
\end{align*}
For further flexibility, it is proposed in \cite{oconnor2014primal} to rescale the problem by
replacing $G$ with $\tilde{G}(y)\coloneqq G(\frac{y}{\beta})$ and
$K$ with $\tilde{K}\coloneqq \beta K$. Then one can replace the dual variable by $\tilde{y}^k\coloneqq\beta
y^k$ and finally use
$\prox_{t\tilde{G}^*}(y)=\frac{1}{\beta}\prox_{t\beta^2G^*}(\beta y)$ to obtain a second
stepsize $s\coloneqq \beta^2 t$, which can be chosen independently from the stepsize $t$.
In total, the resolvents then become
\begin{align*} 
  \MR_{t\mathcal{B}}(x,y)=\left[\begin{array}{c}\prox_{tF}(x)\\\prox_{sG^*}(y)\end{array}\right],\\
    \MR_{t\MA}= \left[\begin{tabular}{l r}
    $I$&$tK^*$\\
    $-sK$&$I$\end{tabular}\right]^{-1}.
\end{align*}
In each step of Douglas-Rachford we need the inverse of a fairly large block matrix. However, as
also noted in~\cite{oconnor2014primal} this can be done efficiently with the help of the Schur
complement. If $K$ is a matrix, $K^*=K^T$ and
\begin{align*}
  &\left[\begin{array}{cc}I&tK^T\\-sK&I\end{array}\right]^{-1}\\
    &=\left[\begin{array}{cc}0&0\\0&I\end{array}\right]+\left[\begin{array}{c}I\\sK\end{array}\right](I+stK^TK)^{-1}\left[\begin{array}{c}I\\-tK\end{array}\right]^T.
\end{align*}
To use this, we need to solve
equations with the positive definite coefficient matrix $I+\lambda K^TK$ efficiently. 

In the following we interpret the linear operators as matrices, without changing notation, i.e. 
the matrix $\symgrad$ also stands for the matrix realizing the linear operation $\symgrad$. With the help of the vectorization operation
   $\operatorname{vec}$, it holds that $\symgrad \cdot \operatorname{vec}(u)=\operatorname{vec}(\symgrad u)$, where
   on the left hand side $\symgrad$ is a matrix and on the right hand side $\symgrad$ is the linear operator from
   \ref{sec:discretization}.

As discussed is section~\ref{sec:gaps}, we have different possibilities to choose the primal variables (and thus, also for the dual variables):
Namely, we could use the primal variables $u$ and $v$, as, e.g. in the MTGV problem~\eqref{eq:mtgv}, or the primal variables $u$ and $w$, as in the corresponding problem~\eqref{eq:mtgvvarchange}.
This choice does not only influence the dual problem and the duality gap, but also the involved linear map $K^{T}K$.
The formulation with $u$ and $v$ as primal variables leads to
\begin{align}\label{eq:KTK-uv}
  K^TK=\begin{pmatrix}-\div&0\\-I&\mathcal{E}^T\end{pmatrix}
    \begin{pmatrix}\nabla&-I\\0&\mathcal{E}\end{pmatrix}
      =\begin{pmatrix}-\Delta&\div\\-\nabla&I+\mathcal{E}^T\mathcal{E}\end{pmatrix}
\end{align}
while the variable change to $u$ and $w$ gives
\begin{align}
  K^TK&=	\begin{pmatrix}-\operatorname{div}(\mathcal{E}^T)\\-\mathcal{E}^T\end{pmatrix}
    	\begin{pmatrix}\mathcal{E}(\nabla)&-\mathcal{E}\end{pmatrix}
	=\begin{pmatrix}H^T\\-\mathcal{E}^T\end{pmatrix}
    	\begin{pmatrix}H&-\mathcal{E}\end{pmatrix}\nonumber\\
	  &=\begin{pmatrix}H^TH&-H^T\mathcal{E}\\-\mathcal{E}^TH&\mathcal{E}^T\mathcal{E}\end{pmatrix},\label{eq:KTK-uw}
\end{align}
 where $H\coloneqq\mathcal{E}(\nabla)$ is the Hessian matrix, cf. Appendix~\ref{sec:proxdual}..
The same is true for CTGV from~\eqref{eq:ctgv}  and the usual TGV problem~\eqref{eq:minTGV}.
For problem~\eqref{eq:DGTGV_1} we have
\begin{align}
  K^TK&=\symgrad^T\symgrad.\label{eq:KTK-v}
\end{align}

 \subsection{Inexact Douglas-Rachford}
 \label{sec:idr}

 For the solution of the linear equation with coefficient matrix
 $I + \lambda K^TK$ we propose to use the preconditioned conjugate
 gradient method (PCG). For preconditioning we do several
 approximations: First we replace complicated discrete difference operators
 by simpler one and then we use a block-diagonal preconditioner and
 use the incomplete Cholesky decomposition in each block. As we will see, with these
 preconditioners we only need one or two iterations of PCG to obtain
 good convergence results of the Douglas-Rachford iteration.  If we
 only use one iteration of PCG and denote $A=(I+stK^TK)$, the linear
 step is approximated by
 \begin{align*}
   A^{-1}b\approx x_k -a M(Ax_k+b),
 \end{align*}
 where the PCG stepsize is given by 
 \begin{align*}
   a=\frac{\langle b-Ax_k,M(b-Ax_k)\rangle}{\langle
 AM(b-Ax_k),M(b-Ax_k)\rangle}
 \end{align*}
 and $M$ is the preconditioner for $A$, i.e. $M^{-1}\approx A$.
 
 In the following the coefficient matrices and preconditioners are given in
 detail.
With the matrices $D_1$ and $D_2$ representing the derivatives in the first and second
direction,
we have
\begin{align*}
  \grad =\begin{pmatrix}D_1\\D_2\end{pmatrix},\quad
  \mathcal{E}=\begin{pmatrix}D_1 & 0\\ \frac{1}{2}D_2 & \frac{1}{2}D_1 \\ 
	\frac{1}{2}D_2 & \frac{1}{2}D_1 \\ 
	0 & D_2\end{pmatrix},
\end{align*}
\begin{align*}
  J = \begin{pmatrix}D_1 & 0\\ D_2 & 0 \\ 
	0 & D_1 \\ 
	0 & D_2\end{pmatrix},\quad
  H = \begin{pmatrix}D_1^2 \\ D_1D_2 \\ 
	D_1D_2  \\ 
	D_2^2\end{pmatrix},
\end{align*}
\begin{align*}
  -\Delta = \nabla^T\nabla = D_1^TD_1 + D_2^TD_2,
\end{align*}
\begin{align*}
  \mathcal{E}^T\symgrad=\begin{pmatrix} D_1^TD_1+\frac{1}{2}D_2^TD_2 & \frac{1}{2}D_2^TD_1\\
    \frac{1}{2}D_1^TD_2 & \frac{1}{2}D_1^TD_1+D_2^TD_2\end{pmatrix},
\end{align*}
\begin{align*}
  J^TJ=\begin{pmatrix} D_1^TD_1+D_2^TD_2 & 0\\
    0 & D_1^TD_1+D_2^TD_2\end{pmatrix},
\end{align*}
\begin{align*}
	H^TH=(D_1^2)^TD_1^2+2(D_1D_2)^T(D_1D_2)+(D_2^2)^TD_2^2,
\end{align*}
\begin{align*}
  -\symgrad^TH=\begin{pmatrix}-D_1^TD_1^2-D_2^T(D_1D_2)\\-D_1^T(D_1D_2)-D_2^TD_2^2\end{pmatrix}.
\end{align*}

The operator $J^{T}J$ is in fact the negative (discrete) Laplace operator applied component-wise and also called vector Laplacian.
Note that the operator $\symgrad^{T}\symgrad$ decomposes as
\[
\symgrad^{T}\symgrad = \tfrac12 J^{T}J  + \tfrac12
\begin{bmatrix}
  D_{1}^{T}\\D_{2}
\end{bmatrix}
\begin{bmatrix}
  D_{1} D_{2}
\end{bmatrix}.
\]
Note that the boundary conditions are implicitely contained in the discretization operators and adjoints (e.g. we use Neumann boundary conditions for the gradient and thus, equations $-\laplace u = f$ always contain Neumann boundary conditions).

Our linear operators are dicretizations of continuous differential
operators. Hence, the resolvent steps correspond to solutions of certain differential equations.
In this context it has been shown to be beneficial, to motivate preconditioners for the linear systems by their continuous counterparts, see~\cite{mardal2011preconditioning}.
 Block diagonal
preconditioners are a natural choice for these operators. 
The conjugate gradient method is usually used for linear
systems of the form $\mathcal{A}x=f$, where $f$ is an element of a
finite dimensional space $X$. As discussed in
\cite{mardal2011preconditioning} it can also be used in the case where
$X$ is infinite dimensional and $\mathcal{A}:X\to X$ is a symmetric
and positive-definite isomorphism. Consider for example the negative
Laplace operator $\mathcal{A}: X=H_0^1(\Omega)\to X^*$ defined by
\begin{align*}
  \langle \mathcal{A} u,v\rangle = \int_{\Omega}\nabla u\cdot \nabla v\;dx,\quad u,v\in X.
\end{align*}
The standard weak formulation of the Dirichlet problem is now $\mathcal{A}x=f$, where $f\in X^*$. Since
$X\neq X^*$ the linear operator $\mathcal{A}$ maps $x$ out of the space and the conjugate
gradient method is not well defined. To overcome this problem, we introduce a preconditioner
$\mathcal{M}$, which is a symmetric and positive definite isomorphism mapping $X^*$ to $X$. The
preconditioned system $\mathcal{M}\mathcal{A}x=\mathcal{M}f$ can then be solved by the conjugate
gradient method. 
We consider now the corresponding continuous linear operator from~\eqref{eq:KTK-uv}:
\begin{align*}
  \MA_{u,v}=I+stK^*K = I + st\begin{pmatrix}-\Delta & -\nabla^*\\-\nabla &
    I+\symgrad^*\symgrad\end{pmatrix}.
\end{align*}
The operator $\MA_{u,v}$ is an isomorphism mapping $X=H^1(\Omega)\times (H^1(\Omega))^2$ into its dual 
$X^*=H^{1}(\Omega)'\times (H^{1}(\Omega)')^2$. 
The canonical
choice of a preconditioner, in the sense of~\cite{mardal2011preconditioning}, is therefore given as the blockdiagonal operator
\begin{align*}
  \mathcal{M}_{u,v}=\begin{pmatrix} (I-st\Delta)^{-1} & 0\\ 0 &
    \left(I+ st\left(I
    + J^*J\right)\right)^{-1}\end{pmatrix}
\end{align*}
(the inverses of the respective operators also exist in the continuous setting, see~\cite[Theorem 6.6]{alt2016linfuncana}). 
To see that $\MA_{u,v}$ has a bounded inverse, we can check coercivity, i.e. for some
$k>0$ we have 
\begin{align*}
 &\left\langle \MA_{u,v}(u,v),(u,v)\right\rangle\\
 &=\langle u,u\rangle_{L^2(\Omega)} + st\langle\nabla u,\nabla u\rangle_{(L^2(\Omega))^2}\\
 &\quad-2st\langle \nabla u, v\rangle_{(L^2(\Omega))^2}\\
 &\quad+(1+st)\langle v, v\rangle_{(L^2(\Omega))^2} + st\langle
  \symgrad v,\symgrad v\rangle_{(L^2(\Omega))^4}\\
 &=\|u\|^2_{L^2(\Omega)} + st\|\nabla u\|^2_{(L^2(\Omega))^2}
 -2st\langle \nabla u, v\rangle_{(L^2(\Omega))^2} \\
 &\quad +(1+st)\|v\|^2_{(L^2(\Omega))^2} + st\|\symgrad v\|^2_{(L^2(\Omega))^4}\\
 &\geq\|u\|^2_{L^2(\Omega)} + st\|\nabla u\|^2_{(L^2(\Omega))^2}\\
 &\quad-2st\|\nabla u\|_{(L^2(\Omega))^2}\|v\|_{(L^2(\Omega))^2} \\
 &\quad+(1+st)\|v\|^2_{(L^2(\Omega))^2} + st\|\symgrad v\|^2_{(L^2(\Omega))^4}\\
 &\geq\|u\|^2_{L^2(\Omega)} + st\|\nabla u\|^2_{(L^2(\Omega))^2}\\
 &\quad-2st\left(\frac{\|\nabla u\|^2_{(L^2(\Omega))^2}}{2\varepsilon}+\frac{\varepsilon\|v\|^2_{(L^2(\Omega))^2}}{2}\right) \\
 &\quad+(1+st)\|v\|^2_{(L^2(\Omega))^2} + st\|\symgrad v\|^2_{(L^2(\Omega))^4}\\
 &=\|u\|^2_{L^2(\Omega)} + st(1-\frac{1}{\varepsilon})\|\nabla u\|^2_{(L^2(\Omega))^2}\\
 &\quad+(1+st(1-\varepsilon))\|v\|^2_{(L^2(\Omega))^2} + st\|\symgrad v\|^2_{(L^2(\Omega))^4}\\
 &\geq \tilde k\left(\|u\|^2_{L^2(\Omega)} + \|\nabla
 u\|^2_{(L^2(\Omega))^2}\right)\\
 &\quad + \tilde k\left(\|v\|^2_{(L^2(\Omega))^2}
 + \|\symgrad v\|^2_{(L^2(\Omega))^4}\right)\\
 &\geq k\left(\|u\|_{H^1(\Omega)}^2+\|v\|^2_{(H^1(\Omega))^2}\right).
\end{align*}
The second to last estimate holds for some $\tilde k>0$ since for all $st>0$ we can find $\varepsilon>0$, such that
$st\left(1-\frac{1}\varepsilon\right)>0$ and $(1+st(1-\varepsilon))>0$. We can chose $\tilde k$ as the
minimum of both of them. The last estimate follows for some $k>0$ as a consequence of Korn's
inequality~\cite{ciarlet2013linear}.

The corresponding continuous linear operator from~\eqref{eq:KTK-v} is
  \begin{align*}
    \MA_{v}= I+st\symgrad^*\symgrad.
  \end{align*}
The operator $\MA_{v}$ is an isomorphism mapping $X=(H^1(\Omega))^2$ into its dual 
$X^*=(H^{1}(\Omega)')^2$. 
The canonical
choice of a preconditioner is therefore given as the blockdiagonal operator
\begin{align*}
  \mathcal{M}_{v}=(I+stJ^*J)^{-1}.
\end{align*}

The corresponding continuous linear operator from~\eqref{eq:KTK-uw} is
  \begin{align*}
    \MA_{u,w}= I+stK^*K=I+st\begin{pmatrix}H^*H & -H^*\symgrad\\-\symgrad^*H & \symgrad^*\symgrad\end{pmatrix},
  \end{align*}
  where $H=\symgrad(\nabla)$ is the Hessian matrix.
  In our experiments we chose the discrete version of the
  blockdiagonal operator
\begin{align*}
  \mathcal{M}_{u,w}=\begin{pmatrix} (I+stH^*H)^{-1} & 0\\ 0 &
    \left(I+ st J^*J\right)^{-1}\end{pmatrix}
\end{align*}
for preconditioning. It gives good numerical results, but is not as nicely accompanied by the theory
as the previous operator.
To obtain fast algorithms the inverses in the preconditioners can be well approximated by the incomplete
Cholesky decomposition.\footnote{We used the MATLAB function \texttt{ichol} with default parameters in the experiments. Varying the parameters did not lead to significantly different results.} 
For the denoising problems with the primal variables $u$ und $v$ we use the preconditioner
\begin{align}
  \label{preconditioner_uv}
  C_{u,v}=\begin{pmatrix} \operatorname{ichol}(I-st\Delta) & 0\\ 0 &
    \operatorname{ichol}\left(I+ st\left(I
    +J^TJ\right)\right)\end{pmatrix}
\end{align}
and for the denoising problems with the primal variables $u$ and $w$ we use the preconditioner
\begin{align}
  \label{preconditioner_uw}
  C_{u,w}=\begin{pmatrix} \operatorname{ichol}(I + H^TH) & 0\\ 0 &
    \operatorname{ichol}\left(I+ stJ^TJ\right)\end{pmatrix}.
\end{align}
For problem~\eqref{eq:DGTGV_1} we use the preconditioner
\begin{align}
  \label{preconditioner_v}
  C_{v}= \operatorname{ichol}(I + stJ^TJ).
\end{align}
Here the preconditioners are given in the form $M^{-1}=C^TC$.

\section{Experiments}
\label{sec:experiments}

In the previous sections we introduced several different models for variational image denoising and also described two algorithms, each applicable to each model.
This leaves us with a large number of parameters that have to be chosen.
In this section, we try to illustrate the effects of these parameters and to give some guidance on how these parameters should be chosen.
To that end, we divide the set of parameters into two groups:
\begin{description}
\item[Problem parameters:] These are the parameters of the model itself. For example in the case of TGV denoising~\eqref{eq:minTGV}, the two problem parameters are the two regularization parameters $\alpha_{0}$ and $\alpha_{1}$ while for MTGV~\eqref{eq:mtgv} we have the parameters $\alpha$ and $\delta_{1}$ and the DGTGV method consisting of~\eqref{eq:DGTGV_1} and~\eqref{eq:DGTV_2} also has the two parameters  $\alpha$ and $\delta_{1}$.
\item[Algorithmic parameters:] These are parameters, that influence only the algorithm, but not the theoretical minimizers. These can be for example: One or more step-sizes, the relaxation parameter, the stopping criterion (e.g. a tolerance for the duality gap), the parameters of the CG iteration, or the preconditioner.
\end{description}
The problem parameters influence the quality of the denoising, while the algoritmic parameters influence the performance (or speed) of the method. Moreover, there is a trade-off between speed and quality: if the algorithm is stopped too early, the minimizer may not be approximated well. Note, however, that sometimes early stopping may increase reconstruction quality (which often indicates that the model can be improved), but we shall not  deal with this question here but rather focus on the analysis of the problem and algorithmic parameters separately.

\subsection{Optimal constants in DGTV, DGTGV and MTGV}

The two-stage denoising method DGTV from section~\ref{sec:denoising-of-gradients} use the parameter $\delta_{2}$ which we motivated to be $\delta_{2} = c\norm[2]{\abs{\grad u^{0}}}$ in section~\ref{subsec:parameter-choice}. We calculated the optimal constants $c$ for various images with different noise-levels, cf. figure~\ref{fig:dgtv_c}.
As expected,  all $c\geq 1$ lead to the same result (in this case $v=0$ is a feasible solution and optimal and therefor, the two-stage method becomes pure TV-denoising).
If we choose $c<1$, we transfer a bit of structure of the input image into the gradient as an additional information for the image denoising step. Hence, $c\approx 0.99$ seems like a sensible choice.

\begin{figure}[H]
\begin{center}
	\includegraphics[scale=1]{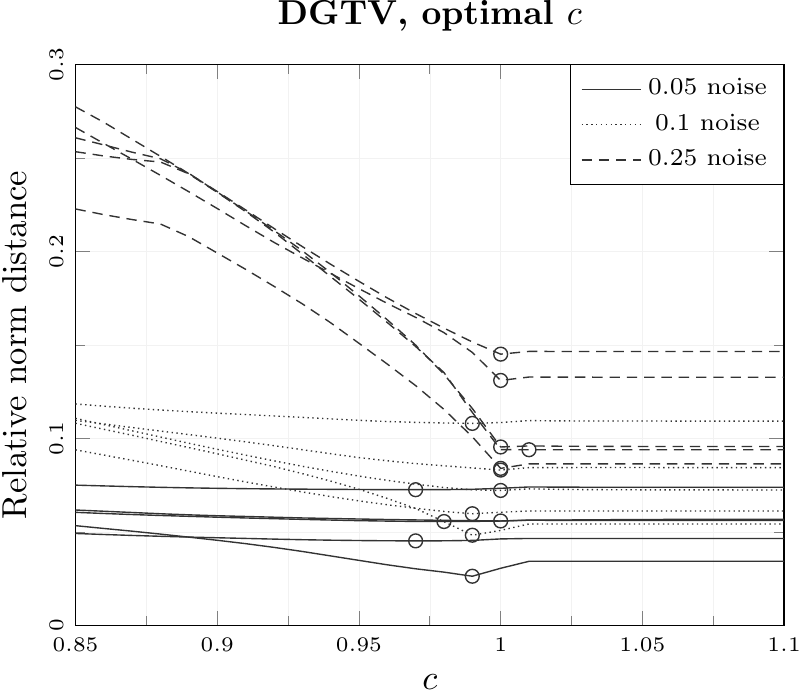}
	\caption{Optimal values for various images and different noise-level for the DGTV method.}\label{fig:dgtv_c}
\end{center}
\end{figure}

A similar experiment for the parameter $\alpha$ in the DGTGV method from section~\ref{sec:denoising-of-gradients} revealed  that all these optimal values are close to $1$, cf.~figure~\ref{fig:dgtgv_alpha_noise}. 
For $\alpha > 1$, the change in the norm distance is minimal, 
hence, the denoised image will be similar to the denoised image 
with $\alpha = 1$. Smaller values $\alpha < 1$ lead to worse 
reconstructions. Therefore, we use a default value of $\alpha = 1$.
For the MTGV method from section~\ref{sec:ctgv} we report the results of the optimization of the parameter $\alpha$ in figure~\ref{fig:mtgv_alpha_noise} and we see that values around $\alpha=2$ seem optimal (while the variance is larger than for DGTGV).

\begin{figure}[htb]
\begin{center}
\begin{subfigure}[htb]{\textwidth}
	\includegraphics[scale=1]{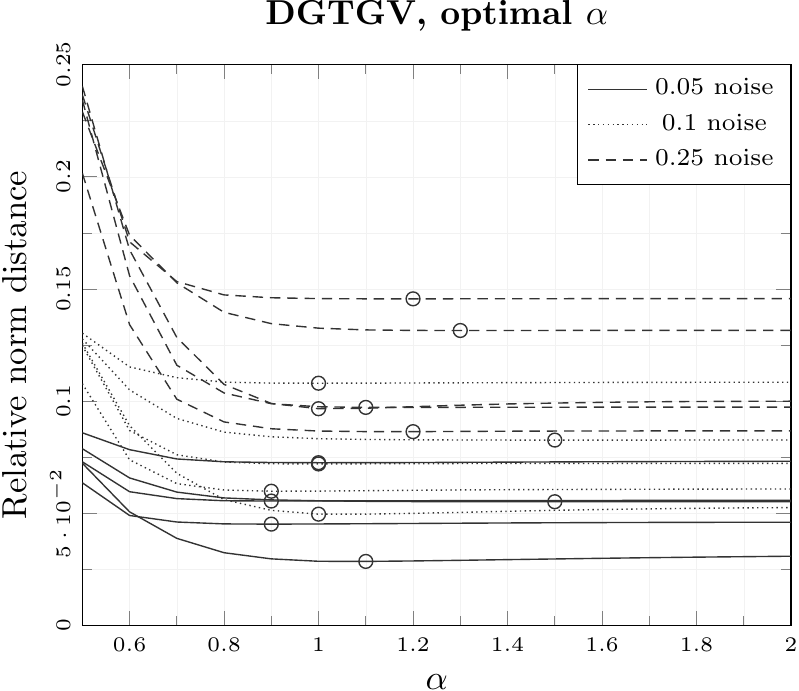}
\end{subfigure}
\subcaption{Optimal $\alpha$ for various images with different noise-levels in the DGTGV method.}\label{fig:dgtgv_alpha_noise}
\vspace{2ex}

\begin{subfigure}[htb]{\textwidth}
	\includegraphics[scale=1]{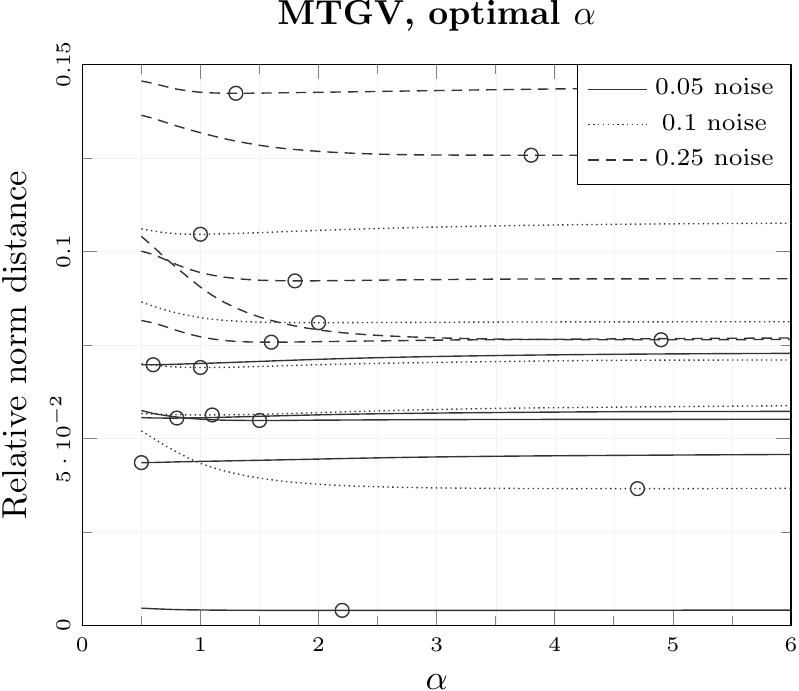}
\end{subfigure}
\subcaption{Optimal $\alpha$ for various images with different noise-levels in the MTGV method.}\label{fig:mtgv_alpha_noise}
\end{center}
\caption{Optimal $\alpha$ values according to relative norm distance between $u$ and the original image $\udag$ for various images and different noise-levels.}
\end{figure}

\subsection{Quality and runtime of MTGV and DGTGV}

Table~\ref{table:MTGVvsDGTGVdefault_sec_all} collects results on the reconstruction quality (measured in PSNR) and the runtime of the two-stage method DGTGV and the MTGV method.
In table~\ref{table:MTGVvsDGTGVbest} we compared the two-stage method DGTGV and the MTGV denoising method with the best possible $\alpha$ (i.e. we calculated an optimal value of this parameter for each image, noise-level and method) and the default $\delta_{1}$ as estimated from the images.
In table~\ref{table:MTGVvsDGTGVdefault} we compared both methods with the default $\alpha$ values, i.e. $\alpha = 1$ for DGTGV and $\alpha = 2$ for MTGV (again with $\delta_{1}$ estimated from the image). 

In both comparisons the PSNR values differ only slightly from each other showing that the default values always lead to results close to the optimal ones.
Moreover, the two-stage method DGTGV consistently gives slightly lower PSNR values than MTGV.
The difference in PSNR is so small, that the resulting images are very similar to each other (cf. figures~\ref{fig:DGTGV_MTGV_affine_default_01},~\ref{fig:DGTGV_MTGV_affine_best_025},~\ref{fig:DGTGV_MTGV_eye_best} for some examples). 

\begin{table*}[htb]                                                                   
\centering
\begin{subtable}[t]{0.2\linewidth}
\centering                                                                           
\begin{tabular}{l}                                                                 
\toprule                                                                             
Image(noise factor) \\                                                
\phantom{c}\\
\midrule                                                                             
affine(0.05) \\                                                  
affine(0.1) \\                                                   
affine(0.25) \\                                                  
\midrule
eye(0.05)\\                                                     
eye(0.1) \\                                                      
eye(0.25)\\                                                     
\midrule
cameraman(0.05) \\                                               
cameraman(0.1) \\                                                 
cameraman(0.25)  \\                                               
\midrule
moonsurface(0.05)\\                                             
moonsurface(0.1)  \\                                              
moonsurface(0.25) \\                                             
\midrule
barbara(0.05) \\                                                 
barbara(0.1) \\                                                   
barbara(0.25) \\                                                 
\bottomrule                                                                          
\end{tabular}                                                                        
\end{subtable}%
\hspace*{-0.35em}
\begin{subtable}[t]{0.15\linewidth}
\setlength\tabcolsep{2pt}
\centering                                                                           
\begin{tabular}{cc}                                                                 
\toprule                                                                             
DGTGV & MTGV \\   
\multicolumn{2}{c}{best $\alpha$}\\                  
\midrule                                                                             
37.26 & 38.30 \\                                                  
32.58 & 33.97 \\                                                   
27.74 & 28.41 \\                                                  
\midrule
31.34 & 31.64 \\                                                     
28.94 & 29.37 \\                                                      
26.26 & 26.98 \\                                                     
\midrule
30.57 & 30.79 \\                                               
27.14 & 27.32 \\                                                 
23.20 & 23.34 \\                                               
\midrule
30.69 & 30.87 \\                                             
28.46 & 28.73 \\                                              
26.08 & 26.38 \\                                             
\midrule
28.35 & 28.66 \\                                                 
24.91 & 25.14 \\                                                   
22.34 & 22.48 \\                                                 
\bottomrule                                                                          
\end{tabular}                                                                        
\caption{PSNR values for best possible $\alpha$ according to each method.}
\label{table:MTGVvsDGTGVbest}                                                        
\end{subtable}%
\hspace*{0.5em}
\begin{subtable}[t]{0.15\linewidth}
\centering                                                                           
\setlength\tabcolsep{2pt}
\begin{tabular}{cc}                                                                  
\toprule                                                                              
DGTGV & MTGV \\  
\multicolumn{2}{c}{default $\alpha$}\\                                                                 
\midrule                                                                              
37.07 & 38.25 \\                                                   
32.66 & 33.65 \\                                                    
26.80 & 27.87 \\                                                   
\midrule                                                                              
31.32 & 31.49 \\                                                       
28.95 & 29.28 \\                                                       
26.09 & 26.89 \\                                                      
\midrule                                                                              
30.53 & 30.78 \\                                                
27.09 & 27.32 \\                                                  
23.15 & 23.26 \\                                                
\midrule                                                                              
30.68 & 30.77 \\                                               
28.46 & 28.65 \\                                               
26.04 & 26.36 \\                                              
\midrule                                                                              
28.35 & 28.49 \\                                                  
24.91 & 25.06 \\                                                    
22.34 & 22.46 \\                                                  
\bottomrule                                                                           
\end{tabular}                                                                         
\caption{PSNR values for default $\alpha$ values according to each method.}
\label{table:MTGVvsDGTGVdefault}                                                      
\end{subtable}%
\hspace*{1em}
\begin{subtable}[t]{0.25\linewidth}
\centering
\setlength\tabcolsep{1.5pt}
\begin{tabular}{cccc}                                                                    
\toprule                                                                                  
{DGTGV} & {DGTGV} & 
{DGTGV} & {MTGV} \\                             
\multicolumn{1}{c}{(grad.)} & \multicolumn{1}{c}{(img.)} & 
\multicolumn{1}{c}{(all)} & \\
\midrule                                                                                  
0.12 & 0.06 & 0.17 & 0.20 \\                                                      
0.15 & 0.05 & 0.19 & 0.25 \\                                                     
0.27 & 0.04 & 0.31 & 0.39 \\                                                     
\midrule
0.14 & 0.16 & 0.30 & 0.91 \\                                                      
0.23 & 0.18 & 0.41 & 0.93 \\                                                      
0.67 & 0.16 & 0.83 & 1.53 \\                                                       
\midrule
0.30 & 0.15 & 0.46 & 1.28 \\                                                       
0.38 & 0.16 & 0.54 & 1.38 \\                                                        
0.73 & 0.16 & 0.89 & 1.81 \\                                                       
\midrule
0.14 & 0.16 & 0.30 & 0.84 \\                                                      
0.28 & 0.17 & 0.45 & 0.93 \\                                                      
0.65 & 0.15 & 0.80 & 1.52 \\                                                       
\midrule
1.19 & 0.58 & 1.77 & 10.98 \\                                                         
1.03 & 0.60 & 1.63 & 7.46 \\                                                         
1.94 & 0.75 & 2.69 & 7.33 \\     
\bottomrule                                                                               
\end{tabular}                                                                             
\caption{Time in seconds (Chambolle-Pock).}
\label{table:MTGVvsDGTGVdefault_time}                                                      
\end{subtable}%
\hspace*{1em}
\begin{subtable}[t]{0.15\linewidth}
\centering
\setlength\tabcolsep{1.5pt}
\begin{tabular}{cc}                                                                                       
\toprule                                                                                                     
DGTGV & MTGV \\
\multicolumn{2}{c}{ }\\
\midrule
0.18 & 0.28 \\                                                                                           
0.20 & 0.30 \\                                                                                           
0.44 & 0.34 \\                                                                                           
\midrule
0.46 & 1.21 \\                                                                                            
0.83 & 1.08 \\                                                                                              
1.66 & 1.26 \\                                                                                              
\midrule
0.80 & 1.79 \\                                                                                            
1.03 & 1.55 \\                                                                                             
1.51 & 1.43 \\                                                                                             
\midrule
0.52 & 1.17 \\                                                                                            
0.72 & 1.08 \\                                                                                             
1.46 & 1.26 \\                                                                                             
\midrule
3.40 & 9.67 \\                                                                                             
4.66 & 7.53 \\                                                                                             
8.03 & 6.04 \\
\bottomrule   
\end{tabular}                                                                                                
\caption{Time in seconds (Douglas-Rachford).}\label{table:DR_MTGVvsDGTGVdefault_sec}                                                                      
\end{subtable}
\caption{\ref{table:MTGVvsDGTGVbest},~\ref{table:MTGVvsDGTGVdefault}: PSNR values of DGTGV and MTGV methods with best possible $\alpha$ value and default values for each method;~\ref{table:MTGVvsDGTGVdefault_time}: Time in seconds for DGTGV and MTGV methods implemented with Chambolle-Pock, for the DGTGV both steps also separately;~\ref{table:DR_MTGVvsDGTGVdefault_sec}: Time in seconds for both methods implemented with Douglas Rachford.}\label{table:MTGVvsDGTGVdefault_sec_all}                                      
\end{table*} \hfill 

In tables~\ref{table:MTGVvsDGTGVdefault_time} and~\ref{table:DR_MTGVvsDGTGVdefault_sec}~we compare different run-times:
First, we used Chabolle-Pock's primal dual method for both steps of the DGTGV method separately, i.e. the gradient denoising as a pre-step, after that the actual image denoising and MTGV in table~\ref{table:MTGVvsDGTGVdefault_time}. It can be seen that the two-stage method DGTGV is faster (usually by a factor of 2 or 3).
In all cases we used a tolerance value of $10^{-3}$ for the relative primal-dual gap as a stopping criterion.
Table~\ref{table:DR_MTGVvsDGTGVdefault_sec} also shows runtimes for inexact preconditioned Douglas-Rachford method. Here we set the tolerance for the primal-dual gap to $10^{-2}$ since, this already gave better or comparable PSRN values (a phenomenon which we can not explain theoretically). However, even with this larger tolerance, the DR-method is only faster for large noise level and the MTGV method.

\begin{figure}[htb]
\begin{center}
\begin{subfigure}[htb]{0.45\linewidth}
	\includegraphics[width=1\linewidth]{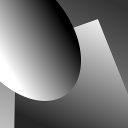}
	\caption{$\udag$}
\end{subfigure}\hspace{1em}%
\begin{subfigure}[htb]{0.45\linewidth}%
	\includegraphics[width=1\linewidth]{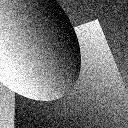}
	\caption{$u_0$}	
\end{subfigure}%

\begin{subfigure}[htb]{0.45\linewidth}
	\includegraphics[width=1\linewidth]{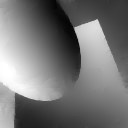}
	\caption{DGTGV}
\end{subfigure}\hspace{1em}%
\begin{subfigure}[htb]{0.45\linewidth}
	\includegraphics[width=1\linewidth]{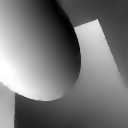}
	\caption{MTGV}
\end{subfigure}
\caption{Comparison of DGTGV and MTGV method of affine image with default $\alpha$ value; noise factor $0.1$.}
\label{fig:DGTGV_MTGV_affine_default_01}
\end{center}
\end{figure}

\begin{figure}[htb]
\begin{center}
\begin{subfigure}[htb]{0.45\linewidth}
	\includegraphics[width=1\linewidth]{affine.png}
	\subcaption{$\udag$}
\end{subfigure}\hspace{1em}%
\begin{subfigure}[htb]{0.45\linewidth}
	\includegraphics[width=1\linewidth]{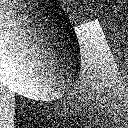}
	\subcaption{$u_0$}
\end{subfigure}%

\begin{subfigure}[htb]{0.45\linewidth}
	\includegraphics[width=1\linewidth]{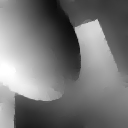}
	\subcaption{DGTGV}
\end{subfigure}\hspace{1em}%
\begin{subfigure}[htb]{0.45\linewidth}
	\includegraphics[width=1\linewidth]{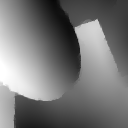}
	\subcaption{MTGV}
\end{subfigure}
\caption{Comparison of DGTGV and MTGV method of affine image with best possible $\alpha$ value; noise factor $0.25$.}
\label{fig:DGTGV_MTGV_affine_best_025}
\end{center}
\end{figure}

\begin{figure}[htb]
\begin{center}
\begin{subfigure}[htb]{0.45\linewidth}
	\includegraphics[width=1\linewidth]{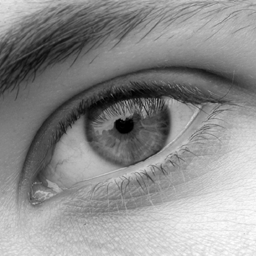}
	\subcaption{$\udag$}
\end{subfigure}\hspace{1em}%
\begin{subfigure}[htb]{0.45\linewidth}
	\includegraphics[width=1\linewidth]{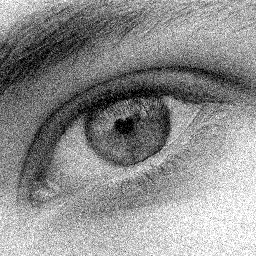}
	\subcaption{$u_0$}
\end{subfigure}\\%
\begin{subfigure}[htb]{0.45\linewidth}
	\includegraphics[width=1\linewidth]{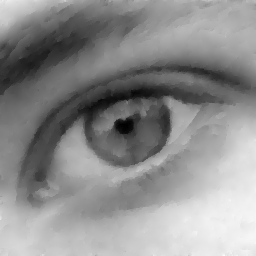}
	\subcaption{DGTGV}
\end{subfigure}\hspace{1em}%
\begin{subfigure}[htb]{0.45\linewidth}
	\includegraphics[width=1\linewidth]{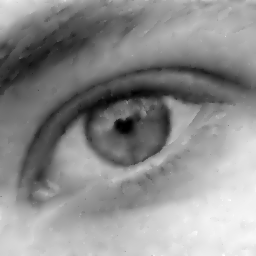}
	\subcaption{MTGV}
\end{subfigure}%
\caption{Comparison of DGTGV and MTGV method with best possible $\alpha$ value; noise factor $0.1$.}
\label{fig:DGTGV_MTGV_eye_best}
\end{center}
\end{figure}
 
\subsection{Comparison of different formulations}

The TGV minimization problem~\eqref{eq:minTGV}, the MTGV minimization problem~\eqref{eq:mtgv}
and the MTGV minimization problem with variable change $\eqref{eq:mtgvvarchange}$
 have the same solution parameter
$u$ if we choose $\delta_1 = \norm[2]{u_0 - u^{\TGV}}$ and $\alpha=2$, where $u^{\TGV}$ is the solution of~\eqref{eq:minTGV}. For the first two problems, we used the algorithms of Douglas-Rachford and Chambolle-Pock. 
The latter one is 
an $\mathcal{O}(1/k)$ primal-dual algorithm with constant step sizes (see
\cite{chambolle2011first}), namely
  \begin{align*}
    \begin{split}
      y^{n+1}&=(I+\sigma\partial G^*)^{-1}(y^n+\sigma K(2x^{n}-x^{n-1}))\\
        x^{n+1}&=(I+\tau\partial F)^{-1}(x^n-\tau K^*y^{n+1})
\end{split}
\end{align*}
For MTGV with variable change we only show the results for the Douglas-Rachford algorithm, since
the Chambolle-Pock algorithm was much slower for this problem.
In figure~\ref{fig:tgv_mtgv} we compare iteration number and time needed to obtain the
desired accuracy of the image. Since the duality gap is not suitable to compare different
minimization problems, the tolerance is given by
$\frac{\norm[2]{u-u^{\TGV}}}{\norm[2]{u^{\TGV}}}$. The reference value
$u^{\TGV}$ is obtained by solving~\eqref{eq:minTGV} with 1000000 iterations of
Chambolle-Pock ($\tau=0.004$,  $\sigma=\frac{1}{\tau\|K\|^2}$, $\|K\|^2=12$). 
The algorithms are tested with the image eye (256x256) corrupted by Gaussian white
noise of mean $0$ and variance $0.1$. The stepsizes of the algorithms are chosen by trial and error:
\begin{itemize}
  \item CP TGV: $\tau=0.008$, $\sigma=\frac{1}{\tau\|K\|^2}$, $\|K\|^2=12$,
  \item CP MTGV: $\tau=0.004$, $\sigma=\frac{1}{\tau\|K\|^2}$, $\|K\|^2=12$,
  \item DR TGV: $s=60$, $t=0.28$,
  \item DR MTGV: $s=60$, $t=0.1$,
  \item DR MTGV (var. change): $s=60$, $t=0.04$.
\end{itemize}
The optimal stepsize depends on the accuracy needed. The Douglas-Rachford algorithm is
used in an inexact manner. The linear operator is approximated by two iterations of the
preconditioned conjugate gradients method, where the preconditioners are given by
$\eqref{preconditioner_uv}$ and $\eqref{preconditioner_uw}$. In figure~\ref{fig:tgv_mtgv} we can see that
the algorithms for TGV and MTGV are competitive. The variable change leads to much slower algorithms.

\begin{figure}[htb]
\hspace{-4em}
\begin{subfigure}{\linewidth}
	\includegraphics[scale = 1]{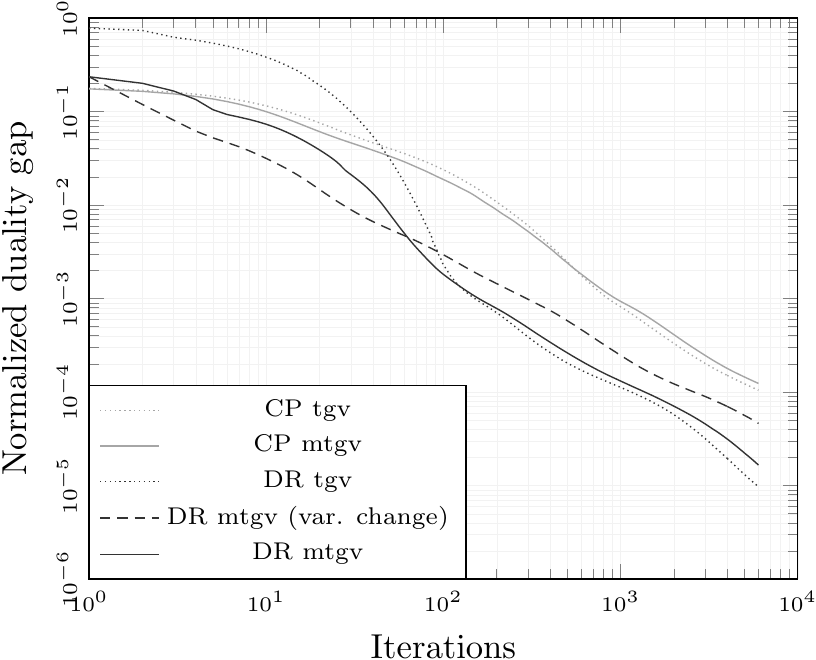}
\end{subfigure}
\begin{subfigure}{\linewidth}
	\includegraphics[scale = 1]{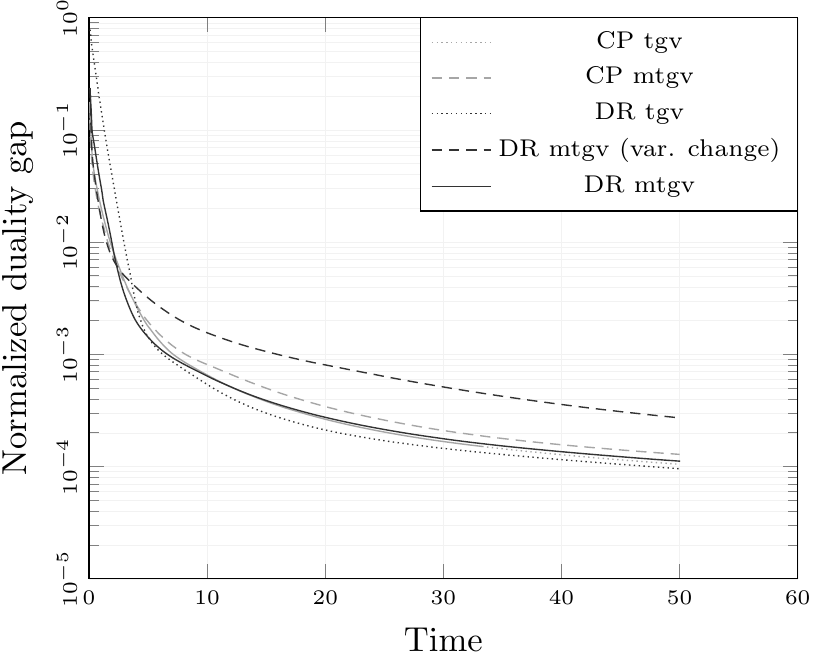}
\end{subfigure}
\caption{Runtime TGV vs. MTGV.}
  \label{fig:tgv_mtgv}
\end{figure}

\subsection{Inexactness for the Douglas-Rachford method}
The Douglas-Rachford iteration in general allows inexact evaluation of the operators as long as the
error stays sum\-mable, see e.g.~\cite{combettes2004averaged}. We made experiments with MTGV, using a few iterates of the conjugate gradient 
method with and without preconditioning (see figure \ref{fig:cg_pcg_iter_time}).The algorithms are tested 
with the image eye (256x256) corrupted by Gaussian white
noise of mean $0$ and variance $0.1$. The stepsizes of
the Douglas-Rachford iteration are chosen by trial and error: $s=120$, $t=0.1$. The Douglas-Rachford iteration is very slow for one or two
iterations of CG without preconditioning. It does not even converge for three iterations of CG
without
preconditioning. Using the preconditioners as proposed in section~\ref{sec:idr} we obtain very good
convergence of Douglas-Rachford for any number of iterations of PCG. In figure~\ref{fig:cg_pcg_iter_time} we can see that
preconditioning is crucial for the method to converge. After 300 iterations all inexact algorithms
have caught up with the exact one regarding the iteration count, while only one or two iterations perform best regarding the computational time.

\begin{figure*}[htb]
\begin{center}
\begin{subfigure}[htb]{\columnwidth}
	\includegraphics[scale=1]{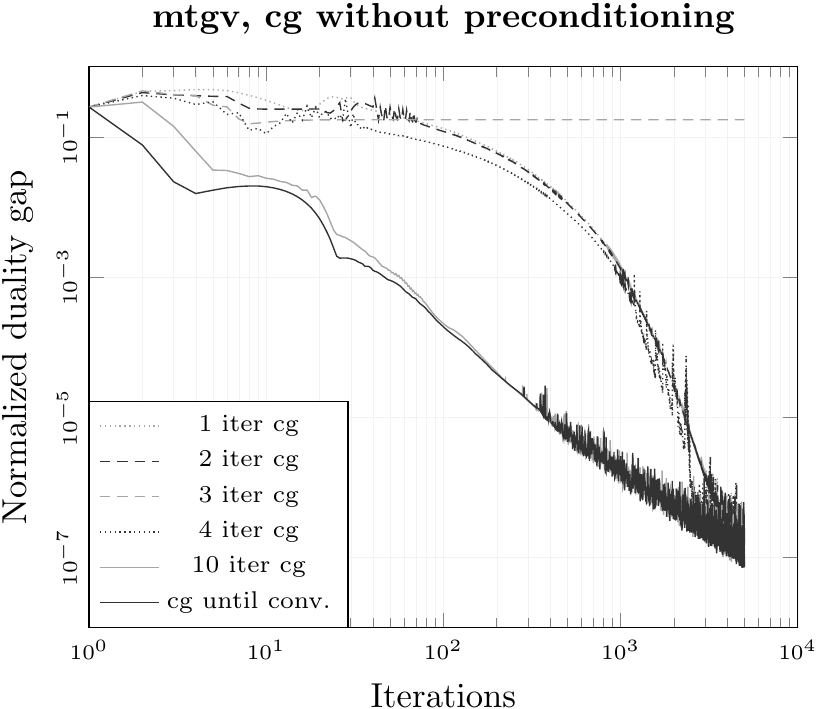}
	\subcaption{Convergence of MTGV without preconditioning according to number of iterations.}\label{fig:cg_cg_iterations}%
\end{subfigure}%
\hspace*{2em}%
\begin{subfigure}[htb]{\columnwidth}
	\includegraphics[scale=1]{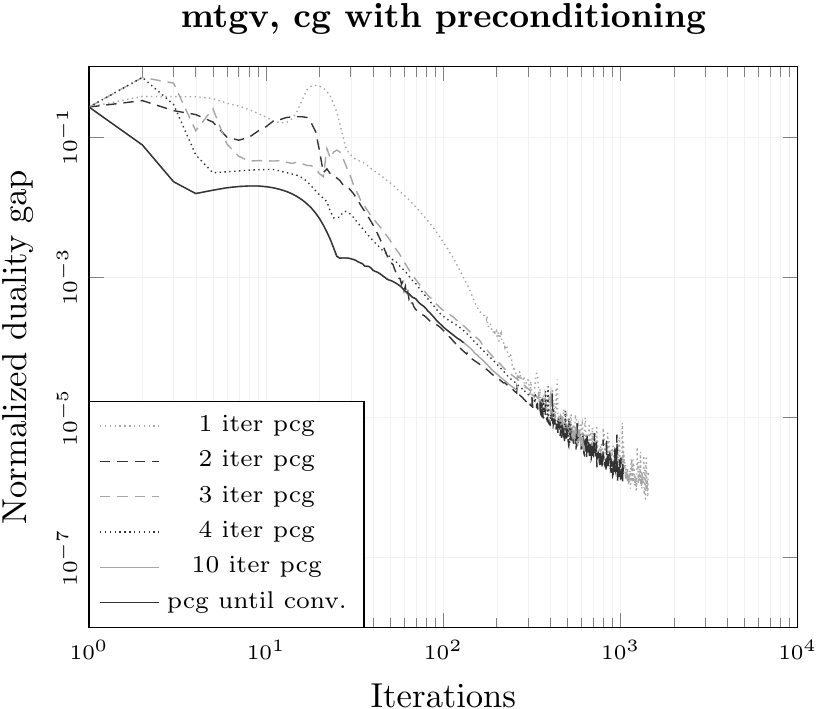}
	\subcaption{Convergence of MTGV with preconditioning according to number of iterations.}\label{fig:cg_pcg_iterations}%
\end{subfigure}
\vspace{2ex}

\begin{subfigure}{\columnwidth}
	\includegraphics[scale = 1]{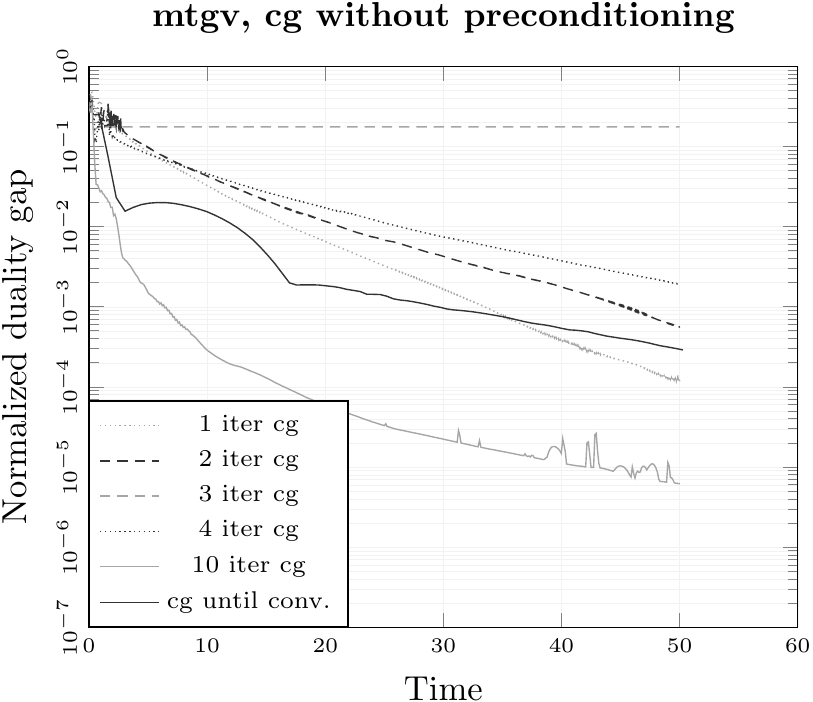}
	\subcaption{Convergence of MTGV without preconditioning according to time in seconds.}\label{fig:cg_cg_time}%
\end{subfigure}%
\hspace*{2em}%
\begin{subfigure}{\columnwidth}
	\includegraphics[scale = 1]{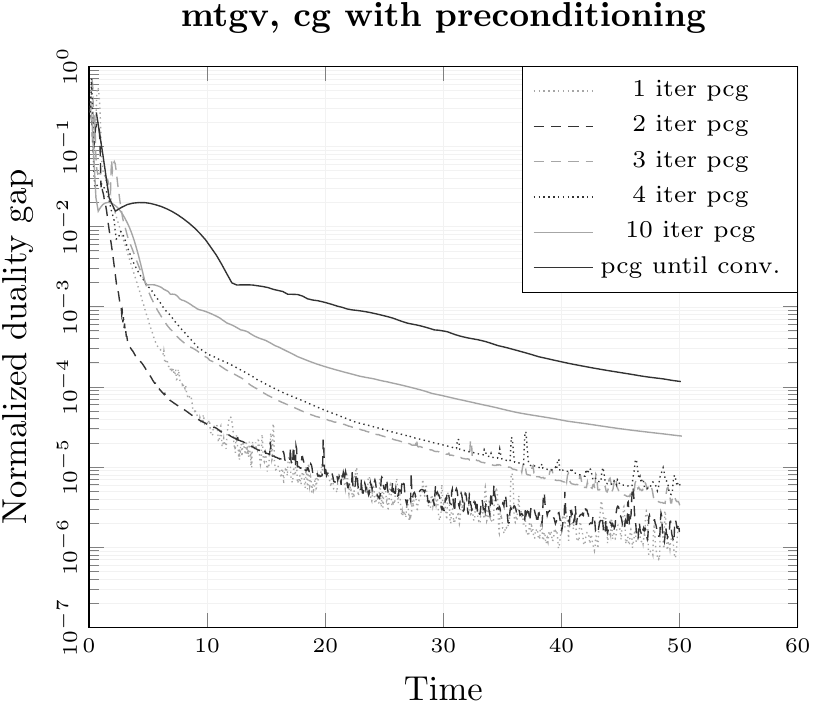}
	\subcaption{Convergence of MTGV with preconditioning according to time in seconds.}\label{fig:cg_pcg_time}%
\end{subfigure}
\end{center}
\caption{A comparisson of the MTGV method with and without preconditioning according to iterations and time.}\label{fig:cg_pcg_iter_time}
\end{figure*}

\section{Conclusion}
\label{sec:conclusion}
 We investigated variants of variational denoising methods using total-variation penalties and an estimate of the image gradient. First, this provides a natural and, at least to us, new interpretation of the successful TGV method.
The reformulation with a constraint for the discrepancy term $\norm[2]{u-u^{0}}$ (which also works for all other norms) together with our empirical observations allows for variational denoising methods that are basically parameter free and we mainly investigated the methods DGTGV and MTGV.
Since the gradient of $u^{0}$ is even more noisy that $u^{0}$ itself, our experiments in Section~\ref{sec:experiments} show, that it is still useful to use a denoised version of $\nabla u^{0}$ as estimate for $\nabla u^{\dag}$. Indeed,  we obtained that the two-stage method DGTGV sacrifices only little denoising performance for a substantial gain in speed.
Put differently, solving the two denoising problems for the gradient and the image is significantly easier, than solving the combined MTGV problem and the denoising result is still good.
This qualifies the two-stage DGTGV as an alternative to MTGV (and hence, TGV) as a denoising method.

Another part of the investigation involved the Douglas-Rachford method for these problems. Here we could derive natural preconditioners for the linear sub-problems and show that they greatly improve the overall speed of the method, especially in the inexact case. However, the overall runtime was only better than the simpler primal-dual method by Chambolle and Pock in case of large noise.

\appendix
\section{Appendix}

\subsection{Discretization}
\label{sec:discretization}

We equip the space of  $M\times N$ images with the inner product and induced norm
\begin{align*}
  &\langle u,v\rangle=\sum_{i=1}^M\sum_{j=1}^N u_{i,j}v_{i,j},\quad \|u\|_2
  =\left(\sum_{i=1}^M\sum_{j=1}^N u_{i,j}^2\right)^{\frac{1}{2}}.
\end{align*}
 For $u\in\RR^{M\times N}$ we define the discrete partial forward derivative (with constant boundary
 extension $u_{M+1,j}=u_{M,j}$ and $u_{i,N+1}=u_{i,N}$ as
  \begin{align*}
    (\partial_1u)_{i,j}&=u_{i+1,j}-u_{i,j}, &
    (\partial_2u)_{i,j}&=u_{i,j+1}-u_{i,j}.
  \end{align*}
  The discrete gradient $\nabla:\RR^{M\times N}\to\RR^{M\times N\times 2}$ is defined by
  \begin{align*}
    (\nabla u)_{i,j,k}=(\partial_ku)_{i,j}.
  \end{align*}
 The symmetrized gradient $\mathcal{E}$ maps from $\RR^{M\times N \times 2}$ to $\RR^{M\times N\times
4}$. For simplification of notation, the 4 blocks are written in one plane:
\begin{align*}
  \mathcal{E}v&=\frac{1}{2}\left(\nabla v+(\nabla v)^T\right)\\
  &=
  \begin{pmatrix}
    \partial_1v_1 & \frac{1}{2}(\partial_1v_2+\partial_2v_1)\\
	\frac{1}{2}(\partial_1v_2+\partial_2v_1) & \partial_2v_2
      \end{pmatrix}.
\end{align*}
 
  The norm $\| |\cdot |\|_1$ in the space $\RR^{M\times N\times K}$ 
  reflects that for
  $v\in\RR^{M\times N \times K}$ we consider $v_{i,j}$ as a vector in $\RR^K$ on which we use
  the Euclidean norm:
  \begin{align*}
    &\||v|\|_1\coloneqq\sum_{i=1}^M\sum_{j=1}^N|v_{i,j}|\quad\text{with
    }|v_{i,j}|\coloneqq\left(\sum_{k=1}^Kv_{i,j,k}^2\right)^{\frac{1}{2}}.
  \end{align*}

  The discrete divergence is the negative adjoint of $\nabla$, i.e. the unique linear mapping
  $ \div: \RR^{M\times N \times 2}
  \to\RR^{M\times N}$, which satisfies
  \begin{align*}
    \langle\nabla u,v\rangle=-\langle u,\div v\rangle,\quad\text{for all }u,v.
  \end{align*}
  The adjoint of the symmetriced gradient is the unique linear mapping
  $ \symgrad^*: \RR^{M\times N \times 4}
  \to\RR^{M\times N \times 2}$, which satisfies
  \begin{align*}
    \langle\symgrad v,p\rangle=\langle v,\symgrad^* p\rangle,\quad\text{for all }v,p.
  \end{align*}

\subsection{Prox operators and duality gaps for considered problems}
\label{sec:proxdual}
In order to calculate experiments with the methods proposed in the previous 
sections, in this section we will state all 
primal and dual functionals according to a general optimization 
problem $\min_x F(x) + G(Kx)$ along with a study of the 
corresponding primal-dual gaps and possibilities to ensure 
feasibility of the iterates throughout the program by 
reformulation of the problems by introducing a substitution 
variable. We also give the proximal operators needed for the Chambolle-Pock and Douglas-Rachford
algorithm.
\subsubsection{DGTV}
\label{subsec:DGTV}
In section~\ref{sec:two-stage-denoising} we formulated a two-
staged denoising method in two ways. 
First, as a constrained version~\eqref{eq:DGTV_1} and~
\eqref{eq:DGTV_2}. 
In this formulation we get the primal functionals for the first problem~\eqref{eq:DGTV_1}
\begin{align}\label{eq:primalDGTVconstraint}
\begin{split}
	F(v) &= \ind_{\norm[1]{\abs{\grad u_0 -\dummy}}
		\leq \delta_2}(v),\\
	G(\psi) &= \norm[1]{\abs{\psi}}
\end{split}
\end{align}
with operator $K = \symgrad$.
The dual problems, in general written as 
\begin{align*}\max_y -F^*(-K^*y)-G^*(y)\end{align*} are
\begin{align}\label{eq:dualDGTVconstraint}
\begin{split}
	F^*(t) &= \delta_2\norm[\infty]{\abs{t}} 
			+ \scp{t}{\grad u_0},\\
	G^*(q) &= \ind_{\norm[\infty]{\abs{\dummy}}\leq 1}(q)
\end{split}
\end{align}
with operator $K^* = \symgrad^*$.
The primal-dual gap writes as
\begin{align}\label{eq:gapDGTVconstraint}
\begin{split}
	\gap&_{\DGTV}^{(1)}(v,q) 
	= \ind_{\norm[1]{\abs{\grad u_0 - \dummy}}\leq 1}(v) 
		+ \norm[1]{\abs{\symgrad v}}\\
		&+ \delta_2\norm[\infty]{\abs{\symgrad^* q}}
		- \scp{\symgrad^* q}{\grad u_0}
		+ \ind_{\norm[\infty]{\abs{\dummy}}\leq 1}(q).
\end{split}
\end{align}
The proximal operators are
\begin{align}\label{eq:proxDGTVconstraint}
\begin{split}
	\prox_{\tau  F}(v) 
	&= \proj_{\norm[1]{\abs{\grad u_0 - \dummy}}
		\leq \delta_2}(v),\\
	\prox_{\sigma G^*}(q)
	&= \proj_{\norm[\infty]{\abs{\dummy}}\leq 1}(q).
\end{split}
\end{align}

For the second problem within DGTV, we have the denoising problem 
of the image with respect to the denoised gradient $\widehat{v}$ 
as output of 
the previous problem, cf. problem~\eqref{eq:DGTV_2}.
There, the primal functionals with $K = \grad$ are
\begin{align}\label{eq:primalDGTVu}
\begin{split}
	F(u)   &= \ind_{\norm[2]{u_0 - \dummy}\leq\delta_1}(u)\\
	G(\phi)&= \norm[1]{\abs{\phi - \widehat{v}}}
\end{split}
\end{align}
and the corresponding dual functionals write as 
\begin{align}\label{eq:dualDGTVu}
\begin{split}
	F^*(s) &= 	\delta_1\norm[2]{s} + \scp{u_0}{s}\\
	G^*(p) &= 	\ind_{\norm[\infty]{\abs{\dummy}}\leq 1}(p)
				+ \scp{\widehat{v}}{p}.
\end{split}
\end{align}
Hence, the primal-dual gap for this problem is
\begin{align}\label{eq:gapDGTVu}
\begin{split}
	\gap_{\DGTGV}^{(2)}&(u,p)
	= 	\ind_{\norm[2]{u_0-\dummy}\leq \delta_1}(u)
		+ \norm[1]{\abs{\grad u - \widehat{v}}}\\
		&- \scp{u_0}{\grad^* p}
		+ \ind_{\norm[\infty]{\abs{\dummy}}\leq 1}(p)
		+ \scp{\widehat{v}}{p}.
\end{split}	
\end{align}
The proximal operators are given by
\begin{align}\label{eq:proxDGTVu}
\begin{split}
	\prox_{\tau F}(u) 
	&= \proj_{\norm[2]{u_0 - \dummy}\leq \delta_1}(u)\\
	\prox_{\sigma G^*}(p) 
	&= \proj_{\norm[\infty]{\abs{\dummy}}\leq 1}(
		p - \sigma \widehat{v}).
\end{split}	
\end{align}

\subsubsection{DGTGV}
\label{subsec:DGTGV}

We reformulate problem~\eqref{eq:DGTGV_1} by using a substitution 
$w = \grad u_0 - v$, also considered in section~\ref{sec:gaps}, 
where we calculated another duality gap.
Hence, we get the primal functionals for the gradient denoising 
problem with operator $K = \symgrad$ as
\begin{align}\label{eq:primalDGTGVw}
\begin{split}
	F(w) 	&= \norm[1]{\abs{w}}\\
	G(\psi) &= \alpha\norm[1]{\abs{\symgrad \grad u_0 - w}}.
\end{split}
\end{align}
Therefore, the dual functionals write as
\begin{align}\label{eq:dualDGTGVw}
\begin{split}
	F^*(t) 	&= \ind_{\norm[\infty]{\abs{\dummy}}\leq 1}(t)\\
	G^*(q) 	&= \ind_{\norm[\infty]{\abs{\dummy}}\leq \alpha}(q)
				+ \scp{\symgrad\grad u_0}{q}.
\end{split}
\end{align}
With that the primal-dual gap is
\begin{align}\label{eq:gapDGTGVw}
\begin{split}
	\gap^{(1)}&_{\DGTGV}(w,q) = 
	 \norm[1]{\abs{w}} 
		+ \alpha \norm[1]{\abs{\symgrad \grad u_0 - w}} \\
		&+ \ind_{\norm[\infty]{\abs{\dummy}}\leq 1}(\symgrad^*q)
		+ \ind_{\norm[\infty]{\abs{\dummy}}\leq \alpha}(q)
		+ \scp{\symgrad\grad u_0}{q}.
\end{split}
\end{align}
The proximal operators are, with Moreau's identity for the first 
one, 
\begin{align}
\begin{split}
	\prox_{\tau  F}(w)
	&= w - \tau\prox_{\tau^{-1} F^{*}}(\tau^{-1}w)\\
	&= w - \tau\proj_{\norm[\infty]{\abs{\dummy}}\leq 1}
		(\tau^{-1}w)\\
	\prox_{\sigma G^*}(q)
	&= \proj_{\norm[\infty]{\abs{\dummy}}\leq \alpha}
		(q - \tau\symgrad\grad u_0).
\end{split}
\end{align}
For the second problem, we already derived all functionals, gaps and proximal operators in subsection~\ref{subsec:DGTV}, 
equations~\eqref{eq:primalDGTVu}--\eqref{eq:proxDGTVu}.

\subsubsection{CTGV}
\label{sec:dualctgv}
The Morozov type constrained total generalized variation denoising 
problem was formulated in section~\ref{sec:ctgv} 
(cf.~\eqref{eq:minCTGV}). For this formulation the primal functionals
are
\begin{align}\label{eq:primalmctgv}
\begin{split}
	F(u,w) &= \ind_{\norm[2]{u - u_0}\leq \delta_1}(u) + 
		\ind_{\norm[1]{\abs{\dummy}}\leq \delta_2}(w),\\
	G(\phi) &= \norm[1]{\abs{\phi}}
\end{split}
\end{align}
with block operator 
\begin{align}\label{eq:operatormctgv}
	K = \symgrad\pmat{\grad & -\id}.
\end{align}
The corresponding dual functionals are
\begin{align}\label{eq:dualctgv}
\begin{split}
	F^*(s,t) &= \delta_1\norm[2]{s} + \scp{s}{u_0}
			+ \delta_2\norm[\infty]{\abs{t}},\\
	G^*(q) &= \ind_{\norm[\infty]{\abs{\dummy}}\leq 1}(q)
\end{split}
\end{align}
with dual block operator
\begin{align}\label{eq:dualoperatormctgv}
	K^* = \pmat{\grad^* \\ -\id}\symgrad^*.
\end{align}
The proximal operators are accordingly given by
\begin{align}\label{eq:proxmctgv}
\begin{split}
	&\prox_{\tau F}(u,w) 
	= \pmat{
		\proj_{\norm[2]{\dummy - u_0}\leq \delta_1}(u)\\
		\proj_{\norm[1]{\abs{\dummy}}\leq \delta_2}(w)
		},\\
	&\prox_{\sigma G^*}(q) 
	= \proj_{\norm[\infty]{\abs{\dummy}}\leq 1}(q)
\end{split}
\end{align}
and the primal-dual gap writes as
\begin{align}\label{eq:pdgapmctgv}
\begin{split}
	\gap&_{\operatorname{CTGV}}(u,w,q)=\\
   	&\norm[1]{\abs{\symgrad(\grad u - w)}} 
			+\ind_{\norm[2]{\dummy -u_0}\leq \delta_1}(u)
			+\ind_{\norm[1]{\abs{\dummy}}\leq \delta_2}(w)\\
			&+\delta_1\norm[2]{\grad^*\symgrad^*q} 
			- \scp{\grad^*\symgrad^*q}{u_0}\\
			&+\delta_2 \norm[\infty]{\abs{\symgrad^*q}} + 
			\ind_{\norm[\infty]{\abs{\dummy}}\leq 1}(q).
\end{split}
\end{align}

\subsubsection{MTGV}
\label{sec:dualmtgv}
In section~\ref{sec:ctgv} we defined the MTGV optimization 
problem~\eqref{eq:mtgv} as a sort of mixed version between the 
TGV~\eqref{eq:minTGV} and the CTGV~\eqref{eq:minCTGV} problems. Thus, 
the primal functionals are
\begin{align}\label{eq:primalmtgv}
\begin{split}
	F(u,v) &= \ind_{\norm[2]{\dummy - u_0}\leq \delta_1}(u),\\
	G(\phi, \psi) &= \norm[1]{\abs{\phi}} + \alpha
				\norm[1]{\abs{\psi}}
\end{split}
\end{align}
with the block operator
\begin{align}\label{eq:operatormtgv}
  K = \pmat{\grad & -\id\\0 &\symgrad}.
\end{align}
Accordingly, the dual functionals are
\begin{align}\label{eq:dualmtgv}
\begin{split}
	F^*(s,t) &= \delta_1\norm[2]{s} + \scp{s}{u_0}
			+ \ind_{\{0\}}(t),\\
	G^*(p,q) &= \delta_2\norm[\infty]{\abs{p}}
			+ \ind_{\norm[\infty]{\abs{\dummy}}\leq 1}(q)
\end{split}
\end{align}
and the dual operator
\begin{align}\label{eq:dualoperatormtgv}
  K = \pmat{\grad^* & 0\\-\id &\symgrad^*}.
\end{align}
The proximal operators are given by 
\begin{align*}
	&\prox_{s F}(u,v)=(\proj_{\norm[2]{\dummy-u_0}\leq \delta_1}(u),v)\\
	&\prox_{t G^*}(p,q)=(\proj_{\norm[\infty]{\abs{\dummy}}\leq
	1}(p),\proj_{\norm[\infty]{\abs{\dummy}}\leq \alpha}(q))
\end{align*}
and the gap function is given by
\begin{align*}
     \gap&_{\operatorname{MTGV}}(u,v,p,q)=\\
   &\norm[1]{\abs{\grad u - v}} + 
		\alpha\norm[1]{\abs{\symgrad(v)}}
		+\ind_{\{\norm[2]{\dummy -u_0}\leq \delta_1\}}(u)\\
		&+\delta_1\norm[2]{\grad^* p} - \langle p, \grad u_0\rangle 
		+\ind_{\{0\}}(p - \symgrad^*q) \\
		&+\ind_{\{\norm[\infty]{\abs{\dummy}}\leq\alpha\}}(p) +
		\ind_{\{\norm[\infty]{\abs{\dummy}}\leq 1\}}(q).
     \end{align*}
     To circumvent the feasibility problem, as introduced in \ref{sec:gaps} one can use the modified
     gap function
\begin{align*}
  \gap&_{\operatorname{MTGV}}(u,v,\tilde q)=\\
 &\leq \norm[1]{\abs{\grad u - v}} + 
		\alpha\norm[1]{\abs{\symgrad(v)}}
		+\ind_{\{\norm[2]{\dummy-u_0}\leq \delta_1\}}(u)\\
		&+\delta_1\norm[2]{\grad^* (\symgrad^*\tilde q)} - \langle \symgrad^*\tilde q, \grad
		u_0\rangle + \ind_{\{\norm[\infty]{\abs{\dummy}}\leq 1\}}(\tilde q),
		\end{align*}
		where $\tilde
		q\coloneqq\frac{q}{\max\left(1,\frac{\norm[\infty]{\abs{\symgrad^*q}}}{\alpha}\right)}$.

\subsubsection{TGV:}
For TGV~\eqref{eq:minTGV} we have the primal functionals
\begin{align*}
	&F(u,v)= \frac{1}{2}\norm[2]{u-u_0}^2\\
	&G(\phi,\psi)=\alpha_1\norm[1]{\abs{\phi}} + 
		\alpha_0\norm[1]{\abs{\psi}},
\end{align*}
with the same block operator~\eqref{eq:operatormtgv}. The corresponding dual functionals are
\begin{align*}
	&F^*(s,t) = \frac{1}{2}\norm[2]{s}^2 + \scp{s}{u_0}
			+ \ind_{\{0\}}(t),\\
	&G^*(p,q)=\ind_{\{\norm[\infty]{\cdot}\leq \alpha_1\}}(p)+ \ind_{\{\norm[\infty]{\abs{\dummy}}\leq
	\alpha_0\}}(q)
\end{align*}
and the dual operator is~\eqref{eq:dualoperatormtgv}. The proximal operators are given by
\begin{align*}
	&\prox_{t F}(u,v)=\left(\frac{u+tu_0}{1+t} ,v\right)\\
	&\prox_{t G^*}(p,q)=(\proj_{\norm[\infty]{\abs{\dummy}}\leq
	\alpha_1}(p),\proj_{\norm[\infty]{\abs{\dummy}}\leq \alpha_0}(q))
      \end{align*}
      and the gap function is given by
      \begin{align*}
     \gap&_{\TGV}(u,v,p,q)=
   \alpha_1\norm[1]{\abs{\grad u - v}} + 
		\alpha_0\norm[1]{\abs{\symgrad(v)}}\\
		&+\frac{1}{2}\norm[2]{u-u_0}^2
		+\frac{1}{2}\norm[2]{\grad^* p}^2 - \langle p, \grad u_0\rangle 
		+\ind_{\{0\}}(p - \symgrad^*q) \\
		&+\ind_{\{\norm[\infty]{\abs{\cdot}}\leq\alpha_0\}}(p) +
		\ind_{\{\norm[\infty]{\abs{\cdot}}\leq\alpha_1\}}(q).
	      \end{align*}
	      To circumvent the feasability problem, as introduced in  \ref{sec:gaps} one can use the modified
     gap function
	      \begin{align*}
     \gap&_{\TGV}(u,v,\tilde q)=\alpha_1\norm[1]{\abs{\grad u - v}} + 
		\alpha_0\norm[1]{\abs{\symgrad(v)}}\\
		&+\frac{1}{2}\norm[2]{u-u_0}^2
		+\frac{1}{2}\norm[2]{\grad^* (\symgrad^*\tilde q)}^2 - \langle \symgrad^*\tilde q, \grad
		u_0\rangle\\ 
		&+ \ind_{\{\norm[\infty]{\abs{\cdot}}\leq\alpha_1\}}(\tilde q).
	      \end{align*}
		where $\tilde
		q\coloneqq\frac{q}{\max\left(1,\frac{\norm[\infty]{\abs{\symgrad^*q}}}{\alpha}\right)}$.

\subsection{Projections}
The projections used for the algorithms are
   \begin{align*}
     \proj_{\norm[2]{\cdot-u_0}\leq \delta}(u)&=u_0 +
     \max\left(\frac{\delta}{\norm[2]{u-u_0}},1\right)\cdot(u-u_0),\\
     \proj_{\norm[\infty]{\abs{\cdot}}\leq\delta}(v)&=\frac{v}{\max(1,\frac{\abs{v}}{\delta})},
   \end{align*}
   with $\abs{v}=\left(\sum_{k=1}^dv_k^2\right)^{\frac{1}{2}}$ in the pointwise sense.
   
The idea how to project onto an mixed norm ball, i.e. $\norm[1]{\abs{\dummy}}$, can be found with in~\cite{Songsiri2011}. There the author developed an algorithm to project onto an $l_1$-norm ball and after that onto a sum of $l_2$-norm balls. The projection itself cannot be stated in a simple closed form.

\end{document}